\pgfplotsset{compat = newest}
\definecolor{myRed}{HTML}{E34A33}
\definecolor{myBlue}{HTML}{0571B0}
\definecolor{myBrown}{HTML}{A6611A}
\definecolor{linkBlue}{HTML}{0055C9}
\definecolor{linkRed}{HTML}{FF1A24}
\definecolor{linkPurple}{HTML}{6200D9}
\theoremstyle{definition}\newtheorem{definition}{Definition}
\theoremstyle{plain}\newtheorem{theorem}{Theorem}
\theoremstyle{plain}\newtheorem{lemma}{Lemma}
\begin{document}

\title{Hankel-Norm Approximation of Large-Scale Descriptor Systems}

\author{%
  Peter~Benner\thanks{
   Max Planck Institute for Dynamics of Complex Technical Systems,
   Sandtorstr. 1, 39106 Magdeburg, Germany.\newline
   E-mail: \texttt{\href{mailto:benner@mpi-magdeburg.mpg.de}%
     {benner@mpi-magdeburg.mpg.de}}
   \newline
   Faculty of Mathematics, Otto von Guericke University,
   Universit{\"a}tsplatz 2, 39106 Magdeburg, Germany.\newline
   E-mail: \texttt{\href{mailto:peter.benner@ovgu.de}%
     {peter.benner@ovgu.de}}},
  Steffen~W.~R.~Werner\thanks{
    Max Planck Institute for Dynamics of Complex
    Technical Systems, Sandtorstr. 1, 39106 Magdeburg, Germany.\newline
    E-mail: \texttt{\href{mailto:werner@mpi-magdeburg.mpg.de}%
      {werner@mpi-magdeburg.mpg.de}}}
}

\date{~}

\maketitle

\begin{abstract}
  The Hankel-norm approximation is a model reduction method which provides the 
best approximation in the Hankel semi-norm.
In this paper the computation of the optimal Hankel-norm approximation is 
generalized to the case of linear time-invariant continuous-time descriptor 
systems.
An efficient algorithm is developed by refining the generalized balanced 
truncation square root method.
For a wide practical usage, adaptations of the introduced algorithm towards 
stable computations and sparse systems are made as well as an approach for a 
projection-free algorithm.
To show the approximation behavior of the introduced method, numerical 
examples are presented.

  \vspace{1em}
  \textbf{Keywords:} model order reduction, Hankel singular values, 
    linear systems, differential-algebraic equations
\end{abstract}

\section{Introduction}

Many different real-world applications, like chemical 
processes, electrical circuits and networks, or computational fluid dynamics, 
naturally lead to models, described by systems of differential-algebraic
equations.
Since experiments can be very costly, time-consuming, and expensive, these 
models are used for simulations and the design of controllers.
The modeling process often results in linear time-invariant continuous-time
descriptor systems of the form
\begin{align} \label{eq:desc}
  \begin{aligned}
    E\dot{x}(t) & = Ax(t) + Bu(t), \\
    y(t) & = Cx(t) + Du(t),
  \end{aligned}
\end{align}
with $E, A \in \mathbb{R}^{n \times n}$, $B \in \mathbb{R}^{n \times m}$, 
$C \in \mathbb{R}^{p \times n}$, $D \in \mathbb{R}^{p \times m}$.
Here, $u(t) \in \mathbb{R}^{m}$ are the inputs of the system, which influence
the generalized states $x(t) \in \mathbb{R}^{n}$ to get the desired outputs
$y(t) \in \mathbb{R}^{p}$.
Throughout this paper, it is assumed that the matrix pencil
$\lambda E - A$ is regular, i.e., there exists at least one
$\lambda \in \mathbb{C}$ such that $\mathrm{det}(\lambda E - A) \neq 0$.
In this case, and with the initial condition $Ex(0) = 0$, the input-output
behavior of the system \eqref{eq:desc} in the frequency domain can be described
via the system's transfer function 
\begin{align} \label{eq:transfun}
  G(s) & = C(sE - A)^{-1}B + D.
\end{align}
The quintuple $(E, A, B, C, D)$, consisting of matrices from \eqref{eq:desc},
defines a realization of \eqref{eq:desc} and its transfer function
\eqref{eq:transfun}.
Usually, the numbers of inputs and outputs are very small in contrast to the 
number of differential-algebraic equations and generalized states $n$, which
quickly enlarges due to different reasons, e.g., the model shall provide a
required accuracy.
Because of that, the usage of complete models often reaches the limits of 
computational resources like memory and computation time.
Since the acquired data for the model usually contain a huge amount of 
redundancies, it is possible to approximate the original model by a new system 
with a much smaller order.
The task of model reduction is to construct a reduced-order descriptor system
\begin{align} \label{eq:reddesc}
  \begin{aligned}
    \hat{E}\dot{\hat{x}}(t) & = \hat{A}\hat{x}(t) + \hat{B}u(t), \\
    \hat{y}(t) & = \hat{C}\hat{x}(t) + \hat{D}u(t),
  \end{aligned}
\end{align}
of order $r \ll n$, such that the input-output behavior of the original system 
\eqref{eq:desc} is approximated. 

Many model reduction techniques were originally developed for the standard 
system case, where the descriptor term $E$ is the identity matrix $I_ {n}$ 
(or at least nonsingular).
But in recent years, quite a few of these methods have been extended to the 
case of descriptor systems with singular $E$ matrices.
There are different approaches for the construction of \eqref{eq:reddesc}, e.g., 
matrix equations can be used to determine a measure for truncatable states 
\cite{morBenS17}, or the transfer function can be approximated by rational 
interpolation \cite{morGugSW13}.
A special technique of model reduction is the computation of the optimal 
Hankel-norm approximation (HNA).
This technique actually provides a best approximation in the Hankel semi-norm.
Based on the work of Adamjan, Arov, and Krein about the approximation of Hankel
matrices \cite{AdaAK71}, an algorithm for the computation of the HNA for
standard systems was introduced by Glover in \cite{morGlo84}.

A generalization of the HNA to the descriptor system case was already 
mentioned by Cao, Saltik, and Weiland in \cite{morCaoSW15}.
They are using the Weierstrass canonical form for an explicit construction of 
reduced decoupled subsystems.
The main problem of this method is the computation of the Weierstrass canonical 
form which is numerically costly and unstable.
Also additional conditions, like C-controllability and C-observability of the 
system, have to be assumed.

In this paper, a new efficient algorithm for the computation of the 
generalized Hankel-norm approximation (GHNA) will be proposed. Our main
contributions are twofold:
\begin{enumerate}
  \item We generalize the concept of all-pass transfer functions to descriptor
    systems (Theorem \ref{thm:allpass}).
  \item We derive new and reliable numerical implementations of the GHNA that
    also allow the application of the Hankel-norm approximation method to
    large-scale problems with sparse coefficient matrices as they arise, e.g.,
    from systems with dynamics described by semi-discretized unsteady partial
    differential equations.
\end{enumerate}
Therefor, in Section \ref{sec:basics} the mathematical background of linear 
descriptor systems is recalled.
Then, the HNA method for the standard system case is introduced in the first part
of Section \ref{sec:GHNA}.
Afterwards, the generalized balanced truncation is reviewed and used for the
construction of the new GHNA method.
The numerical difficulties and adjustments are discussed in Section 
\ref{sec:methods} for usable implementations of the method.
Two different implementations of the method are then tested on numerical 
examples in Section \ref{sec:example}.
In Section \ref{sec:conclusion}, the conclusions of this paper can be found.

\section{Mathematical Basics}
\label{sec:basics}

For regular matrix pencils $\lambda E - A$, the Weierstrass canonical form
always exists: there are invertible matrices $W, T \in
\mathbb{C}^{n \times n}$ such that
\begin{align} \label{eq:weierform}
  W(\lambda E - A)T & = \lambda \begin{bmatrix} I_{n_{f}} & 0 \\ 0 & N 
    \end{bmatrix} - \begin{bmatrix} J & 0 \\ 0 & I_{n_{\infty}} \end{bmatrix},
\end{align}
where $J$ and $N$ are both in Jordan canonical form, $J$ is regular, and $N$ is 
nilpotent with index $\nu$.
The numbers $n_{f}$ and $n_{\infty}$ are the dimensions of the deflating 
subspaces corresponding to the finite and infinite eigenvalues of 
$\lambda E - A$, respectively.
Then, the spectral projectors onto the left and right deflating subspaces 
corresponding to the finite eigenvalues of the matrix pencil $\lambda E - A$ 
can be defined as
\begin{align} \label{eq:projectors}
  \begin{aligned}
    P_{\ell} & = W \begin{bmatrix} I_{n_{f}} & 0 \\ 0 & 0 \end{bmatrix} W^{-1} 
      & \text{and} && P_{r} & = T^{-1} \begin{bmatrix} I_{n_{f}} & 0 \\ 0 & 0 
    \end{bmatrix} T,
  \end{aligned}
\end{align}
with $W$ and $T$ from the Weierstrass canonical form \eqref{eq:weierform}.

Another necessary assumption is the c-stability of the matrix pencil 
$\lambda E - A$, i.e., the matrix pencil $\lambda E - A$ is regular and all
finite eigenvalues of $\lambda E - A$ lie in the open left half-plane.
In this case, the proper controllability and observability Gramians are defined 
as the unique, positive semidefinite solutions of the projected generalized 
continuous-time Lyapunov equations
\begin{align} \label{eq:gcalec}
  E\mathcal{G}_{pc}A^{T} + A\mathcal{G}_{pc}E^{T} + P_{\ell}BB^{T}P_{\ell}^{T} 
    & = 0, ~~~~\mathcal{G}_{pc} = P_{r}\mathcal{G}_{pc}P_{r}^{T},\\ 
  \label{eq:gcaleo}
  E^{T}\mathcal{G}_{po}A + A^{T}\mathcal{G}_{po}E + P_{r}^{T}C^{T}CP_{r} 
    & = 0, ~~~~\mathcal{G}_{po} = P_{\ell}^{T}\mathcal{G}_{po}P_{\ell},
\end{align}
with $P_{\ell}$ and $P_{r}$ the spectral projectors corresponding to the finite 
eigenvalues \eqref{eq:projectors}; see \cite{Sty02}.
Furthermore, the improper controllability and observability Gramians are given 
as the unique, positive semidefinite solutions of the projected generalized 
discrete-time Lyapunov equations
\begin{align} \label{eq:gdalec}
  A\mathcal{G}_{ic}A^{T} - E\mathcal{G}_{ic}E^{T} - Q_{\ell}BB^{T}Q_{\ell}^{T} 
    & = 0, ~~~~\mathcal{G}_{ic} = Q_{r}\mathcal{G}_{ic}Q_{r}^{T},\\
  \label{eq:gdaleo}
  A^{T}\mathcal{G}_{io}A - E^{T}\mathcal{G}_{io}E - Q_{r}^{T}C^{T}CQ_{r} 
    & = 0, ~~~~\mathcal{G}_{io} = Q_{\ell}^{T}\mathcal{G}_{io}Q_{\ell},
\end{align}
with $Q_{\ell} = I_{n} - P_{\ell}$ and $Q_{r} = I_{n} - P_{r}$ the spectral 
projectors onto the left and right deflating subspaces corresponding to the 
infinite eigenvalues of the matrix pencil $\lambda E - A$; see \cite{Sty02}.

Using the system Gramians, the set of Hankel singular values is defined in the 
following; see \cite{morMehS05}.

\begin{definition}
  The square roots of the $n_{f}$ largest eigenvalues of
  $\mathcal{G}_{pc}E^{T}\mathcal{G}_{po}E$ denoted by $\varsigma_{1} \geq 
  \varsigma_{2} \geq \cdots \geq \varsigma_{n_{f}}$ are the proper Hankel 
  singular values of \eqref{eq:desc}.
  The square roots of the $n_{\infty}$ largest eigenvalues of 
  $\mathcal{G}_{ic}A^{T}\mathcal{G}_{io}A$ denoted by $\theta_{1} \geq 
  \theta_{2} \geq \cdots \geq \theta_{n_{\infty}}$ are the improper Hankel 
  singular values of \eqref{eq:desc}.
\end{definition}

In case of a non-singular descriptor term $E$, the proper Hankel singular values
are the classical Hankel singular values of the system.
Therefor, an equivalent energy interpretation of the proper Hankel singular 
values exists which proposes the truncation of states corresponding to small 
proper Hankel singular values, which are difficult to control and observe.
Unfortunately, this does not hold for the improper Hankel singular values since 
these correspond to the constraints of the system.
The truncation of non-zero improper Hankel singular values may result in 
physically meaningless systems.

There exist diverse concepts of controllability and observability for
descriptor systems. For this paper, we restrict ourselves to the following
definitions; see, e.g., \cite{Sty02}.

\begin{definition} System \eqref{eq:desc} is called:
  \begin{enumerate}
    \item R-controllable if
      \begin{align*}
        \begin{aligned}
          \mathrm{rank}\begin{bmatrix} \lambda E - A, & B \end{bmatrix} & = n
            & \text{for all}~ & \lambda \in \mathbb{C}.
        \end{aligned}
      \end{align*}
    \item C-controllable if the system is R-controllable and
      \begin{align*}
        \mathrm{rank}\begin{bmatrix} E, & B \end{bmatrix} & = n.
      \end{align*}
    \item R-observable if
      \begin{align*}
        \begin{aligned}
          \mathrm{rank}\begin{bmatrix} \lambda E - A \\ C \end{bmatrix} & = n
            & \text{for all}~ & \lambda \in \mathbb{C}.
        \end{aligned}
      \end{align*}
    \item C-observable if the system is R-observable and
      \begin{align*}
        \mathrm{rank}\begin{bmatrix} E \\ C \end{bmatrix} & = n.
      \end{align*}
  \end{enumerate}
\end{definition}

The relation between these controllability, observability notions and the 
system Gramians is given in \cite[Theorem 2.3]{morSty04}.
Especially, all proper Hankel singular values are non-zero if and only
if the system is R-controllable and R-observable.

The mapping from past inputs $u_{-}:(-\infty, 0] \rightarrow \mathbb{R}^{m}$ 
to future outputs $y_{+}:(0, +\infty] \rightarrow \mathbb{R}^{p}$ is described
by the Hankel operator $y_{+} = \mathcal{H}u_{-}$.
A generalization of this operator to the case of descriptor systems can be found 
in \cite{morCaoSW15}.
The measure of the influence of past inputs on future outputs in the 
$\mathcal{L}_{2}$-norm leads to the definition of the Hankel semi-norm for
descriptor systems.

\begin{definition}
  The Hankel semi-norm of a system $G$ is given by
  \begin{align} \label{eq:hankelnorm}
    \lVert G \rVert_{H} & = \sup\limits_{u_{-} \in \mathcal{W}_{2}^{\nu - 1}
      (-\infty, 0]}{\frac{\lVert y_{+} \rVert_{\mathcal{L}_{2}}}{\lVert u_{-} 
      \rVert_{\mathcal{L}_{2}}}},
  \end{align}
  where $\mathcal{W}_{2}^{\nu - 1}(-\infty, 0]$ denotes the Sobolev space of 
  $\nu - 1$ times weakly differentiable functions w.r.t. the $\mathcal{L}_{2}$ 
  inner product on the interval $(-\infty, 0]$ and $\lVert . 
  \rVert_{\mathcal{L}_{2}}$ the $\mathcal{L}_{2}$-norm.
\end{definition}

In case of an invertible descriptor term $E$, the Hankel semi-norm 
\eqref{eq:hankelnorm} simplifies to
\begin{align*}
  \lVert G \rVert_{H} & = \varsigma_{\mathrm{max}}(G),
\end{align*}
where $\varsigma_{\mathrm{max}}(G)$ is the largest Hankel singular value of the 
system $G$.

\section{Generalized Hankel-Norm Approximation}
\label{sec:GHNA}

\subsection{Algorithm for Standard Systems}
First, the algorithm for the standard system case, introduced by Glover in
\cite{morGlo84}, is considered.
Therefor, a balanced minimal realization of the given standard 
system $(I_{n_{\mathrm{min}}}, A, B, C, D)$ is assumed, where
$n_{\mathrm{min}}$ is the McMillan degree of the system, i.e., the order of its
minimal realization.
The computation is usually done by the balanced truncation square root method.
Since the resulting system is balanced and minimal, the system Gramians are 
equal and diagonal
\begin{align*}
  \mathcal{G}_{pc} = \mathcal{G}_{po} = \mathrm{diag}(\varsigma_1, 
    \varsigma_2, \ldots, \varsigma_{n_{\mathrm{min}}}),
\end{align*}
with $\varsigma_1, \ldots, \varsigma_{n_{\mathrm{min}}}$ all non-zero Hankel 
singular values of the system.
Next, the system is partitioned by the order $r$ such that
\begin{align*}
  \varsigma_1 \geq \ldots \geq \varsigma_{r} > \varsigma_{r + 1} = \ldots = 
    \varsigma_{r + k + 1} > \varsigma_{r + k + 2} \geq \ldots \geq 
    \varsigma_{n_{\mathrm{min}}},
\end{align*}
with $k \geq 1$ being the multiplicity of the $(r + 1)$-st Hankel singular 
value.
The Gramians are reordered to separate the block with the $(r+1)$-st Hankel 
singular value as
\begin{align} \label{eq:partGram}
  \check{\mathcal{G}}_{pc} = \check{\mathcal{G}}_{po} = 
    \begin{bmatrix} \check{\Sigma} & \\ & \varsigma_{r + 1}I_{k} \end{bmatrix},
\end{align}
with $\check{\Sigma} = \mathrm{diag}(\varsigma_1, \ldots, \varsigma_{r}, 
\varsigma_{r + k + 2}, \ldots, \varsigma_{n_{\mathrm{min}}})$.
Accordingly to \eqref{eq:partGram}, the remaining system matrices have to be 
permuted and partitioned 
\begin{align*}
  \check{A} = \begin{bmatrix} A_{11} & A_{12} \\ A_{21} & A_{22} 
    \end{bmatrix},~ \check{B} = \begin{bmatrix} B_{1} \\ B_{2} 
    \end{bmatrix},~ \check{C} = \begin{bmatrix} C_{1}, & C_{2}
    \end{bmatrix}.
\end{align*}
Then, the partitioned system is transformed by the following formulas
\begin{align} \label{eq:transform}
  \begin{aligned}
    \tilde{A} & = \Gamma^{-1}(\varsigma_{r+1}^{2}A_{11}^{T} + 
      \check{\Sigma} A_{11} \check{\Sigma} + 
      \varsigma_{r+1}C_{1}^{T}UB_{1}^{T}),\\
    \tilde{B} & = \Gamma^{-1}(\check{\Sigma} B_{1} - 
      \varsigma_{r+1}C_{1}^{T}U),\\
    \tilde{C} & = C_{1}\check{\Sigma} - \varsigma_{r+1}UB_{1}^{T},\\
    \tilde{D} & = D + \varsigma_{r+1}U,
  \end{aligned}
\end{align}
with $\Gamma = \check{\Sigma}^{2} - \varsigma_{r+1}^{2}I_{n_{\mathrm{min}}-k}$
and $U = (C_{2}^{T})^{\dagger}B_{2}$.
Here, $M^{\dagger}$ denotes the Moore-Penrose pseudo-inverse of a matrix $M$.
This system is constructed such that the error transfer function 
$\mathcal{E} = G - \tilde{G}$ is scaled all-pass with
$\tilde{G}$ the transfer function of \eqref{eq:transform}, i.e., it holds
\begin{align} \label{eq:allpasserr}
  \mathcal{E}(s)\mathcal{E}^{T}(-s) = \varsigma_{r+1}^{2}I_{p},
\end{align}
for all $s \in \mathbb{C}$ that are not poles of $\mathcal{E}(s)$ or
$\mathcal{E}^{T}(-s)$.
In this case, the approximation error satisfies
\begin{align} \label{eq:allpasserror}
  \lVert \mathcal{E} \rVert_{H} = \lVert \mathcal{E} 
    \rVert_{\mathcal{L}_{\infty}} = \varsigma_{r+1}.
\end{align}

The transfer function $\tilde{G}$ of \eqref{eq:transform} has exactly 
$n_{\mathrm{min}} - k - r$ unstable poles.
As last step, an additive decomposition of $\tilde{G}$ is computed such
that $\tilde{G} = G_{h} + G_{+}$, where $G_{+}$ is the anti-stable part
of order $n_{\mathrm{min}} - k - r$ and $G_{h}$ is the stable part of order $r$.
Since the Hankel semi-norm only depends on the stable part of the system,
the error \eqref{eq:allpasserror} in the Hankel semi-norm does not change if the
unstable part is removed, such that
\begin{align} \label{eq:hankelerror}
  \lVert G - G_{h} \rVert_{H} = \varsigma_{r+1}.
\end{align}

\subsection{Computing a Balanced Realization for Descriptor Systems}

As for the standard system case, for descriptor systems, a balanced 
conditionally minimal realization is needed.
The term ``conditionally'' minimal means that the order of the system is minimal
except of the reduction of the index-$1$ parts in $E$, see \cite{Sok06}.
The computation is done using the generalized balanced truncation square root 
method (GBT(SR)).
The basic idea of this method is the computation of a balanced realization and
the truncation of unnecessary states.

\begin{definition}
  A realization of a descriptor system \eqref{eq:desc} is called balanced if
  \begin{align*}
    \begin{aligned}
      \mathcal{G}_{pc} & = \mathcal{G}_{po} = \begin{bmatrix} \Sigma & 0 \\ 
      0 & 0 \end{bmatrix} & \text{and} && \mathcal{G}_{ic}
      & = \mathcal{G}_{io} = \begin{bmatrix} 0 & 0 \\ 0 & \Theta \end{bmatrix}
    \end{aligned}
  \end{align*}
  hold, with the proper Hankel singular values
  $\Sigma = \mathrm{diag}(\varsigma_{1}, \ldots, \varsigma_{n_{f}})$ and the
  improper Hankel singular values $\Theta = \mathrm{diag}(\theta_{1}, \ldots,
  \theta_{n_{\infty}})$.
\end{definition}

The truncation of the states is made with respect to the computed Hankel 
singular values.
The proper Hankel singular values have the same meaning as the classical Hankel 
singular values in the standard case, i.e., states corresponding to small proper 
Hankel singular values are difficult to control and observe at the same time and
can be omitted.
In case of the improper Hankel singular values, only zeros can be truncated 
since the truncation of non-zero improper Hankel singular values results in 
physically meaningless approximations \cite{morMehS05}.
The number of non-zero improper Hankel singular values is equal to the rank of 
the matrix $\mathcal{G}_{ic}A^{T}\mathcal{G}_{io}A$, which can in fact be
bounded by
\begin{align} \label{eq:impbound}
  \mathrm{rank}(\mathcal{G}_{ic}A^{T}\mathcal{G}_{io}A) \leq
    \min(\nu m, \nu p, n_{\infty}),
\end{align}
with $\nu$, index of the system, $m$, number of inputs, $p$, number of outputs, 
and $n_{\infty}$, dimension of the deflating subspace corresponding to the 
infinite eigenvalues of $\lambda E - A$.
So for large $n_{\infty}$ and usually small $\nu$, the descriptor system
\eqref{eq:desc} can be reduced significantly by the truncation of zero improper
Hankel singular values.

One method to compute the balanced truncation of a descriptor system is the 
square root method.
Therefor, consider the skinny singular value decompositions
\begin{align} \label{eq:svd1} 
  L_{p}^{T}ER_{p} & = \begin{bmatrix} U_{1}, & U_{2} \end{bmatrix} 
    \begin{bmatrix} \Sigma_{1} & 0 \\ 0 & \Sigma_{2} \end{bmatrix} 
    \begin{bmatrix} V_{1}^{T} \\ V_{2}^{T} \end{bmatrix},\\ \label{eq:svd2}
  L_{i}^{T}AR_{i} & = U_{3} \Theta_{3} V_{3}^{T},
\end{align}
with $\mathcal{G}_{pc} = R_{p}R_{p}^{T}$, $\mathcal{G}_{po} = L_{p}L_{p}^{T}$, 
$\mathcal{G}_{ic} = R_{i}R_{i}^{T}$, and $\mathcal{G}_{io} = L_{i}L_{i}^{T}$.
The matrices $\begin{bmatrix} U_{1}, & U_{2} \end{bmatrix}$, $\begin{bmatrix} 
V_{1}, & V_{2} \end{bmatrix}$, $U_{3}$ and $V_{3}$ have orthonormal columns and 
the diagonal matrices $\Sigma_{1}$, $\Sigma_{2}$, and $\Theta_{3}$ contain the 
non-zero proper and improper Hankel singular values, respectively.
The partition of the proper Hankel singular values is chosen such that 
$\Sigma_{1}$ contains all the desired Hankel singular values and $\Sigma_{2}$ 
the undesired ones.
By using the singular value decompositions in \eqref{eq:svd1} and 
\eqref{eq:svd2}, the following transformation matrices can be defined
\begin{align} \label{eq:bttrans}
  \begin{aligned}
    W_{\ell} & = \begin{bmatrix} L_{p}U_{1}\Sigma_{1}^{-\frac{1}{2}}, & 
      L_{i}U_{3}\Theta_{3}^{-\frac{1}{2}} \end{bmatrix} \in 
      \mathbb{R}^{n \times \ell}, \\
    T_{\ell} & = \begin{bmatrix} R_{p}V_{1}\Sigma_{1}^{-\frac{1}{2}}, & 
      R_{i}V_{3}\Theta_{3}^{-\frac{1}{2}} \end{bmatrix} \in 
      \mathbb{R}^{n \times \ell},
  \end{aligned}
\end{align}
where $\ell = \ell_{f} + \ell_{\infty}$ is the sum of the number of desired 
proper Hankel singular values $\ell_{f}$ and the non-zero improper Hankel 
singular values $\ell_{\infty}$.
The transformed realization
\begin{align} \label{eq:btrel}
  (\hat{E}, \hat{A}, \hat{B}, \hat{C}, \hat{D}) = (W_{\ell}^{T}ET_{\ell}, 
    W_{\ell}^{T}AT_{\ell}, W_{\ell}^{T}B, CT_{\ell}, D)
\end{align}
is of order $\ell$ and balanced with the set of Hankel singular values contained 
in $\Sigma_{1}$ and $\Theta_{3}$.
The resulting matrix pencil $\lambda\hat{E} - \hat{A}$ is a resembling of the 
Weierstrass canonical form \eqref{eq:weierform}, such that
\begin{align} \label{eq:srweier}
  \begin{aligned}
    \hat{E} & = \begin{bmatrix} I_{\ell_{f}} & 0 \\ 0 & E_{\infty} \end{bmatrix} 
    & \text{and} && \hat{A} & = \begin{bmatrix} A_{f} & 0 \\ 0 & 
    I_{\ell_{\infty}} \end{bmatrix}
  \end{aligned}
\end{align}
hold, where $A_{f} \in \mathbb{R}^{\ell_{f} \times \ell_{f}}$ is non-singular 
and $E_{\infty} \in \mathbb{R}^{\ell_{\infty} \times \ell_{\infty}}$ is 
nilpotent with index $\nu$.

Due to the reason that only the zero improper Hankel singular values have been 
truncated, the polynomial part of the system $G$ has not changed.
So it can be shown that the same error bound as for the classical balanced 
truncation method holds.
Let $\hat{G}$ be the transformed descriptor system \eqref{eq:btrel}, then it 
holds
\begin{align*}
  \lVert G - \hat{G} \rVert_{\mathcal{H}_{\infty}} \leq 2 \sum\limits_{k = 
    \ell_{f}+1}^{n_{f}}{\varsigma_{k}(G),}
\end{align*}
with $\varsigma_{k}(G)$ the $k$-th proper Hankel singular value of $G$.

\subsection{Hankel-Norm Approximation of Descriptor Systems}

As for the standard case, the GHNA method for descriptor systems is based on the
construction of an error system with all-pass transfer function
\eqref{eq:allpasserr}.
The following theorem provides an algebraic characterization of descriptor
systems with all-pass transfer functions.

\begin{theorem} \label{thm:allpass}
  Let $(E, A, B, C, D)$ be a realization of a descriptor system \eqref{eq:desc}
  with a regular matrix pencil $\lambda E - A$, the same number of inputs and 
  outputs, $m = p$, the system's transfer function $G(s)$ and $\varsigma > 0$ a 
  real constant.
  Also, it is assumed that the descriptor system is $R$-controllable and
  $R$-observable.
  Then $G(s)$ is all-pass, i.e., $G(s)G^{T}(-s) = \varsigma^{2}I_{m}$ holds,
  if and only if the following conditions are satisfied:
  \begin{enumerate}
    \item There are symmetric matrices $\mathcal{G}_{pc}$ and $\mathcal{G}_{po}$
      with 
      \begin{align} \label{eq:cproj}
        \mathcal{G}_{pc} & = P_{r}\mathcal{G}_{pc}P_{r}^{T},\\ \label{eq:oproj}
        \mathcal{G}_{po} & = P_{\ell}^{T} \mathcal{G}_{po}P_{\ell}.
      \end{align}
    \item The matrices $\mathcal{G}_{pc}$ and $\mathcal{G}_{po}$ are the 
      solutions of the projected generalized continuous-time Lyapunov equations
      \begin{align} \label{eq:control}
        E\mathcal{G}_{pc}A^{T} + A \mathcal{G}_{pc}E^{T} + 
          P_{\ell}BB^{T}P_{\ell}^{T} & = 0,\\ \label{eq:observe}
        E^{T}\mathcal{G}_{po}A + A^{T}\mathcal{G}_{po}E +
          P_{r}^{T}C^{T}CP_{r} & = 0.
       \end{align}
     \item The proper Hankel singular values satisfy
       \begin{align} \label{eq:phsv1}
         \mathcal{G}_{pc}E^{T}\mathcal{G}_{po}E & = \varsigma^{2}P_{r},\\
         \label{eq:phsv2}
         \mathcal{G}_{po}E\mathcal{G}_{pc}E^{T} & = \varsigma^{2}P_{\ell}^{T}.
       \end{align}
     \item Let $G(s) = G_{sp}(s) + P(s)$ be decomposed into the strictly
       proper part $G_{sp}$ and the polynomial part $P$. Then it holds 
       $P(s) = \sum\limits_{k = 0}^{\infty}{M_k}s^{k}$ with
       \begin{align} \label{eq:M0}
         M_{0}M_{0}^{T} & = \varsigma^{2}I_{m},\\ \label{eq:Mk}
         M_{k} & = 0 \quad\text{for}\quad k \geq 1.
       \end{align}
     \item Also, the following constraints hold
       \begin{align} \label{eq:addcon1}
         M_{0}^{T}CP_{r} + B^{T}\mathcal{G}_{po}E & = 0,\\ \label{eq:addcon2}
         M_{0}B^{T}P_{\ell}^{T} + C\mathcal{G}_{pc}E^{T} & = 0.
       \end{align}
   \end{enumerate}
\end{theorem}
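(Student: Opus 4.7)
The plan is to reduce to the Weierstrass canonical form \eqref{eq:weierform} so that the system decouples into a slow (strictly proper) subsystem and a fast (polynomial) subsystem, and then to recognize that on the slow part the theorem is the classical Glover all-pass characterization for standard systems, while on the fast part the theorem reduces to matching the coefficients of a polynomial identity. After transforming, I would partition
\begin{align*}
  WB = \begin{bmatrix} B_{1} \\ B_{2} \end{bmatrix}, \qquad
  CT = \begin{bmatrix} C_{1}, & C_{2} \end{bmatrix},
\end{align*}
so that $G_{sp}(s) = C_{1}(sI_{n_{f}}-J)^{-1}B_{1}$ and $P(s) = D - \sum_{k=0}^{\nu-1} s^{k}C_{2}N^{k}B_{2}$, with $M_{0} = D - C_{2}B_{2}$ and $M_{k} = -C_{2}N^{k}B_{2}$ for $k\ge 1$. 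In this coordinate system $P_{r}$ and $P_{\ell}$ become the block projector onto the first $n_{f}$ coordinates, so conditions \eqref{eq:cproj}–\eqref{eq:oproj} just say that the Gramians live in the slow block; I will denote the nontrivial blocks by $P_{11}$ and $Q_{11}$.

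For the ``if'' direction, I would write $G(s)G^{T}(-s) = G_{sp}(s)G_{sp}^{T}(-s) + G_{sp}(s)M_{0}^{T} + M_{0}G_{sp}^{T}(-s) + M_{0}M_{0}^{T}$ since \eqref{eq:Mk} annihilates the $s$-dependent part of $P$. The term $M_{0}M_{0}^{T}=\varsigma^{2}I_{m}$ by \eqref{eq:M0}, so I need the other three terms to cancel. Using the Lyapunov equation \eqref{eq:control} in the slow block one obtains the Sylvester-type identity
\begin{align*}
  (sI - J)^{-1} B_{1} B_{1}^{T} (-sI - J^{T})^{-1} = P_{11} (-sI - J^{T})^{-1} + (sI - J)^{-1} P_{11},
\end{align*}
and combining this with the Hankel singular value relations \eqref{eq:phsv1}–\eqref{eq:phsv2} (which specialize to $P_{11}Q_{11} = \varsigma^{2}I_{n_{f}}$) together with the cross-coupling conditions \eqref{eq:addcon1}–\eqref{eq:addcon2} (which in these coordinates read $M_{0}^{T}C_{1} + B_{1}^{T}Q_{11} = 0$ and $M_{0}B_{1}^{T} + C_{1}P_{11} = 0$) collapses the three remaining terms to zero. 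This is exactly Glover's original balanced all-pass calculation, transplanted onto the finite block.

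For the ``only if'' direction, I would start from $G(s)G^{T}(-s) = \varsigma^{2}I_{m}$ and examine the behavior as $|s|\to\infty$: since $G_{sp}(s)G_{sp}^{T}(-s) \to 0$ and the polynomial cross-terms grow with $s$ unless they vanish identically, matching coefficients of $s^{k}$ forces $M_{k}=0$ for $k\ge 1$ and $M_{0}M_{0}^{T}=\varsigma^{2}I_{m}$, giving \eqref{eq:M0}, \eqref{eq:Mk}. What remains is the strictly proper identity $G_{sp}(s)G_{sp}^{T}(-s) + G_{sp}(s)M_{0}^{T} + M_{0}G_{sp}^{T}(-s) = 0$, a classical all-pass identity on the slow subsystem $(I_{n_{f}}, J, B_{1}, C_{1})$. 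Under the R-controllability and R-observability hypotheses the standard Gramians $P_{11}, Q_{11}$ of this minimal realization exist uniquely, satisfy their Lyapunov equations \eqref{eq:control}–\eqref{eq:observe}, and applying Glover's standard-system lemma yields both $P_{11}Q_{11}=\varsigma^{2}I_{n_{f}}$ and the cross-coupling equations. Embedding these blocks via the projectors recovers \eqref{eq:cproj}–\eqref{eq:phsv2} and \eqref{eq:addcon1}–\eqref{eq:addcon2}.

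The main obstacle I anticipate is the careful bookkeeping of the cross-coupling identities \eqref{eq:addcon1}–\eqref{eq:addcon2}, which are the bridge between the polynomial data $M_{0}$ and the finite-dynamics Gramians; in particular, one has to be vigilant about the contributions of the improper block to the overall $D$ term through $C_{2}B_{2}$, and then verify that the projectors in \eqref{eq:addcon1}–\eqref{eq:addcon2} correctly strip this contribution off. The expansions and cancellations themselves are routine once the Weierstrass decoupling is in place; the conceptual content lies in recognizing that \eqref{eq:M0}–\eqref{eq:addcon2} are precisely Glover's all-pass conditions for the slow subsystem, augmented with a consistency constraint between $M_{0}$ and $D - C_{2}B_{2}$ in coordinates.
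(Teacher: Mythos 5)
Your proposal is correct, and its skeleton coincides with the paper's: decouple the pencil into a finite and an infinite block, recognize that the conditions restricted to the finite block are exactly Glover's all-pass characterization for standard systems, and handle the infinite block by forcing the polynomial part down to a constant $M_{0}$ with $M_{0}M_{0}^{T} = \varsigma^{2}I_{m}$. The differences lie in how the two sub-results are obtained. For the slow block you invoke Glover's standard-system lemma as a black box in the ``only if'' direction; the paper instead re-derives it, writing $G^{-1}(s) = G^{T}(-s)$, using the state-space isomorphism theorem (this is precisely where R-controllability and R-observability enter) to produce transformation matrices $W, T$ with $W = T^{-T}$, and then setting $\tilde{G}_{pc} = -TE_{f}^{-T}$ and $\tilde{G}_{po} = -T^{-1}E_{f}^{-1}$ explicitly. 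Your citation is legitimate and shorter, but be careful with the phrase ``the standard Gramians exist uniquely'': the theorem assumes no c-stability, so $P_{11}, Q_{11}$ need not be Gramians in the integral sense, need not be positive semidefinite, and need not be unique solutions of the Lyapunov equations --- Glover's lemma (like the paper's construction) only asserts the \emph{existence} of symmetric solutions, which is all the statement requires. For the polynomial part, the paper proves a standalone lemma that an all-pass transfer function can be neither improper nor strictly proper, via a degree-and-sign argument on leading numerator coefficients that explicitly rules out MIMO cancellations; your coefficient matching at $s = \infty$ is cleaner once the realization is in hand (if $K \geq 1$ is the top index with $M_{K} \neq 0$, the coefficient of $s^{2K}$ in $G(s)G^{T}(-s)$ is $(-1)^{K}M_{K}M_{K}^{T}$, whose trace is $(-1)^{K}\lVert M_{K}\rVert_{F}^{2} \neq 0$), but you should spell this out, since the bare assertion that ``the cross-terms grow unless they vanish'' is exactly the cancellation issue the paper's lemma is at pains to exclude. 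Finally, one technical caveat: you decouple via the Weierstrass form \eqref{eq:weierform}, whose transformation matrices are complex in general, whereas the paper deliberately uses a real block diagonalization $Q(\lambda E - A)Z$ so that the lifted matrices such as $\mathcal{G}_{pc} = Z\operatorname{diag}(\tilde{G}_{pc},0)Z^{T}$ are manifestly real and symmetric; you should either use a real decoupling or add an argument that your constructed Gramians are real.
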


\begin{proof}
  The proof can be found in the Appendix.
\end{proof}

Theorem \ref{thm:allpass} can be used to derive more general construction
formulas for all-pass error systems, see \cite{morBenW17} for generalized 
formulas for an invertible $E$ matrix.
Also, it shows that the improper part of the system should not change.
For the development of an algorithm, the structure of the reduced-order model
\eqref{eq:srweier}, obtained from the generalized balanced truncation, can be
exploited.
So, let the matrices $\hat{B}$ and $\hat{C}$ be partitioned accordingly to 
\eqref{eq:srweier} as
\begin{align*}
  \begin{aligned}
    \hat{B} & = \begin{bmatrix} B_{f} \\ B_{\infty} \end{bmatrix} & \text{and} &&
      \hat{C} & = \begin{bmatrix} B_{f}, & B_{\infty} \end{bmatrix}.
  \end{aligned}
\end{align*}
Using this block partition, the system $(\hat{E}, \hat{A}, \hat{B}, \hat{C}, D)$ 
automatically decouples into its slow subsystem
\begin{align} \label{eq:slow}
  \begin{aligned}
    \dot{x}_{f}(t) & = A_{f}x_{f}(t) + B_{f}u(t),\\
    y_{f}(t) & = C_{f}x_{f}(t)
  \end{aligned}
\end{align}
and its fast subsystem
\begin{align} \label{eq:fast}
  \begin{aligned}
    E_{\infty}\dot{x}_{\infty}(t) & = x_{\infty}(t) + B_{\infty}u(t),\\
    y_{\infty}(t) & = C_{\infty}x_{\infty}(t) + Du(t).
  \end{aligned}
\end{align}

First, the fast subsystem \eqref{eq:fast} is considered.
Since the GBT(SR) was used to compute the system \eqref{eq:fast}, there are no 
zero improper Hankel singular values anymore.
As mentioned before and considering Theorem \ref{thm:allpass}, there is no 
meaningful further reduction concerning the improper Hankel singular values 
without generating physically meaningless results, so the fast subsystem stays 
unchanged.

Now, let us consider the slow subsystem \eqref{eq:slow}.
It is easy to see that \eqref{eq:slow} is in standard form.
Also beneficial properties, resulting from the applied balanced truncation 
method, still hold for this subsystem which means it is stable and balanced.

Let the original system be decomposed as $G = G_{sp} + P$ into its slow
subsystem $G_{sp}$ and its fast subsystem $P$.
By the truncation of only zero proper Hankel singular values, the system 
\eqref{eq:slow} is a minimal realization of the original slow subsystem 
$G_{sp}$.
Now, the standard HNA method, mentioned in the previous section, can be applied 
to \eqref{eq:slow}.
As result, an $r$-th order HNA is computed
\begin{align} \label{eq:hankelsys}
  \begin{aligned}
    E_{h}\dot{x}_{h}(t) & = A_{h}x_{h}(t) + B_{h}u(t),\\
    y_{h}(t) & = C_{h}x_{h}(t) + D_{h}u(t),
  \end{aligned}
\end{align}
where $E_{h}$ results from avoiding disadvantageous scaling of the matrices
$A_{h}$ and $B_{h}$.
More general transformation formulas for invertible $E$ matrices have been 
developed in \cite{morBenW17}.
To get an optimal HNA of the descriptor system \eqref{eq:desc}, now the computed 
HNA \eqref{eq:hankelsys} and the reduced-order fast subsystem 
\eqref{eq:fast} are coupled
\begin{align}\label{eq:redsys}
  \begin{aligned}
    \begin{bmatrix} E_{h} & 0 \\ 0 & E_{\infty} \end{bmatrix} \dot{\hat{x}}(t) 
      & = \begin{bmatrix} A_{h} & 0 \\ 0 & I_{\ell_{\infty}}\end{bmatrix} 
      \hat{x}(t) + \begin{bmatrix} B_{h} \\ B_{\infty}\end{bmatrix}u(t), \\
    \hat{y}(t) & = \begin{bmatrix} C_{h}, & C_{\infty}\end{bmatrix}\hat{x}(t) 
      + (D_{h} + D)u(t).
  \end{aligned}
\end{align}
In the following theorem, the properties of the resulting GHNA are summarized.

\begin{theorem}
  Let $G$ be a c-stable descriptor system \eqref{eq:desc} with a regular matrix 
  pencil.
  The $\ell$-th order generalized Hankel-norm approximation \eqref{eq:redsys}, 
  with its transfer function $\hat{G}$ and $\ell = r + \ell_{\infty}$, has the 
  following properties:
  \begin{enumerate}
    \item \label{thm:ghna1}
      The realization of $\hat{G}$ is conditionally minimal and c-stable.
    \item \label{thm:ghna2}
      The absolute error in the Hankel semi-norm is given by
      \begin{align*}
        \lVert G - \hat{G} \rVert_{H} & = \varsigma_{r+1}(G),
      \end{align*}
      where $\varsigma_{r+1}(G)$ is the $(r+1)$-st proper Hankel singular value 
      of $G$.
    \item \label{thm:ghna3}
      The absolute error in the $\mathcal{H}_{\infty}$-norm can be bounded by
      \begin{align*}
        \lVert G - \hat{G} \rVert_{\mathcal{H}_{\infty}} & \leq 
          2\sum\limits_{k = r+1}^{n_{f}}{\varsigma_{k}(G)},
      \end{align*}
      where $\varsigma_{k}(G)$ is the $k$-th proper Hankel singular values 
      of $G$.
  \end{enumerate}
\end{theorem}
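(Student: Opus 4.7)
The plan is to reduce everything to the standard-system theory of Section 3.1 by exploiting the block decoupling of the reduced realization \eqref{eq:redsys}. The key identity I would aim to establish first is that the transfer functions satisfy
\begin{align*}
  G - \hat{G} = G_{sp} - G_{h},
\end{align*}
i.e., the polynomial parts cancel. To see this, I would argue that the GBT(SR) preprocessing truncates only \emph{zero} improper Hankel singular values, so the fast subsystem \eqref{eq:fast} is a conditionally minimal realization of the full polynomial part $P$ of $G$; on the finite side, only zero proper Hankel singular values are removed, so the slow subsystem \eqref{eq:slow} is a minimal realization of $G_{sp}$ with the same non-zero proper Hankel singular values as $G$. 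The feedthrough matrix $D_{h}+D$ in \eqref{eq:redsys} then exactly reassembles $G_{h}(s)+P(s)$, yielding the cancellation.

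For claim \ref{thm:ghna1}, I would use the block-diagonal structure \eqref{eq:srweier} of $\lambda\hat{E}-\hat{A}$ inherited by \eqref{eq:redsys}. The slow block $(E_{h},A_{h},B_{h},C_{h},D_{h})$ is a minimal standard-system realization of $G_{h}$ with all eigenvalues of $A_{h}$ in the open left half-plane by Glover's construction (the anti-stable part was removed in the last step of Section 3.1). The fast block coincides with the GBT(SR) fast subsystem, which is conditionally minimal and has only infinite eigenvalues. Hence the coupled pencil is regular with all finite eigenvalues strictly in the open left half-plane, giving c-stability, and the overall realization is conditionally minimal as a direct sum of a minimal finite part and a conditionally minimal infinite part.

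For claim \ref{thm:ghna2}, I would invoke that the Hankel semi-norm \eqref{eq:hankelnorm} only senses the strictly proper/finite dynamics (the polynomial part corresponds to a differential operator acting on the present input, not a past-to-future mapping). Combined with the identity $G-\hat{G}=G_{sp}-G_{h}$, the error system is effectively standard and strictly proper modulo a constant, so its Hankel semi-norm equals the classical Hankel norm of $G_{sp}-G_{h}$. Applying \eqref{eq:hankelerror} to the slow subsystem and noting $\varsigma_{r+1}(G_{sp})=\varsigma_{r+1}(G)$ (by preservation of non-zero proper Hankel singular values under GBT(SR)) gives the claimed equality. Claim \ref{thm:ghna3} follows by the same reduction: $G-\hat{G}=G_{sp}-G_{h}$ is a standard stable system, and one applies the $\mathcal{H}_{\infty}$ error bound known for the standard Hankel-norm approximation followed by the additive decomposition, together with the matching of Hankel singular values $\varsigma_{k}(G_{sp})=\varsigma_{k}(G)$.

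The main obstacle I expect is the careful bookkeeping in the first step: verifying that the feedthrough update $D_{h}+D$ in \eqref{eq:redsys} combined with the partitioned fast subsystem truly reproduces $G_{h}(s)+P(s)$ term-by-term, and justifying that the Hankel semi-norm of the descriptor error system collapses to the classical Hankel norm of $G_{sp}-G_{h}$ rather than picking up any contribution from the index-$\nu$ structure. Once this polynomial-part cancellation is nailed down, claims \ref{thm:ghna2} and \ref{thm:ghna3} are immediate corollaries of Glover's standard-system theory.
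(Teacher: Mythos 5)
Your proposal is correct and follows essentially the same route as the paper: decompose $G = G_{sp} + P$, observe that the GBT(SR) preprocessing is conditionally minimal so the fast/polynomial parts cancel in the error system, reduce to the standard-system identity $G - \hat{G} = G_{sp} - G_{h}$, and then invoke Glover's Hankel-norm equality and the standard $\mathcal{H}_{\infty}$ bound, with the non-zero proper Hankel singular values of the slow subsystem matching those of $G$. The paper's proof is terser (it asserts $G_{b}=G_{sp}$ and $P_{b}=P$ directly from conditional minimality and does not dwell on the feedthrough bookkeeping), but there is no substantive difference.
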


\begin{proof}
  Let $G = G_{sp} + P$ be the original system and $\tilde{G} = G_{b} + P_{b}$ 
  the balanced, conditionally minimal realization obtained by the GBT(SR)
  method.
  Here $G_{sp}$, $G_{b}$ denote the slow subsystems and $P$, $P_{b}$ the fast 
  ones.
  The GHNA is constructed by
  \begin{align} \label{eq:hankeldec}
    \hat{G} & = G_{h} + P_{b},
  \end{align}
  where $G_{h}$ is the $r$-th order HNA \eqref{eq:hankelsys} of the standard system 
  $G_{b}$.
  
  First, consider part \ref{thm:ghna1}.
  The balanced realization $\tilde{G}$ is conditionally minimal and c-stable.
  So by construction \eqref{eq:hankeldec}, both of these properties are 
  transferred to the GHNA.
  
  Now consider the error formulas in \ref{thm:ghna2}. and \ref{thm:ghna3}.
  Therefor, let $\mathcal{E} = G - \hat{G}$ be the error system of the GHNA.
  Then it holds
  \begin{align*}
    \mathcal{E} & = G - \hat{G}\\
    & = G_{sp} + P - G_{h} - P_{b}\\
    & = G_{b} + P_{b} - G_{h} - P_{b}\\
    & = G_{b} - G_{h},
  \end{align*}
  since the balanced realization $\tilde{G}$ is conditionally minimal and 
  therefor, $G_{b} = G_{sp}$ and $P_{b} = P$.
  Using the error bound of the standard method \eqref{eq:hankelerror} one 
  obtains
  \begin{align*}
    \lVert G - \hat{G} \rVert_{H} = \lVert G_{b} - G_{h} \rVert_{H} 
      = \varsigma_{r+1}(G_{b}) = \varsigma_{r+1}(G).
  \end{align*}
  Using the same approach, the error in the $\mathcal{H}_{\infty}$-norm is 
  given by
  \begin{align*}
    \lVert G - \hat{G} \rVert_{\mathcal{H}_{\infty}} = \lVert G_{b} - G_{h}
      \rVert_{\mathcal{H}_{\infty}} \leq 2\sum\limits_{k = r+1}^{n_{f}}
      {\varsigma_{k}(G)},
  \end{align*}
  if the $\mathcal{H}_{\infty}$-norm error bound for the standard $r$-th order 
  HNA from \cite{morAnt05} is used.
\end{proof}
 
In Algorithm \ref{alg:ghna}, the complete GHNA method is summarized.
\vspace{1em}

\begin{algorithm}
  \caption{Generalized Hankel-Norm Approximation (GHNA) Method}
  \label{alg:ghna}
  \begin{algorithmic}[1]
    \State Solve the continuous-time Lyapunov equations \eqref{eq:gcalec} and 
      \eqref{eq:gcaleo} for the Cholesky factorizations $\mathcal{G}_{pc} = 
      R_{p}R_{p}^{T}$ and $\mathcal{G}_{po} = L_{p}L_{p}^{T}$.
    \State Solve the discrete-time Lyapunov equations \eqref{eq:gdalec} and 
      \eqref{eq:gdaleo} for the Cholesky factorizations $\mathcal{G}_{ic} = 
      R_{i}R_{i}^{T}$ and $\mathcal{G}_{io} = L_{i}L_{i}^{T}$.
    \State Compute the two skinny singular value decompositions
      \begin{align*}
        \begin{aligned}
          L_{p}^{T}ER_{p} & = U_{1} \Sigma V_{1}^{T} & \text{and} &&
            L_{i}^{T}AR_{i} & = U_{2} \Theta V_{2}^{T}.
          \end{aligned}
      \end{align*}
    \State Compute the transformation matrices
      \begin{align*}
        \begin{aligned}
        W_{p} & = L_{p}U_{1}\Sigma^{-\frac{1}{2}}, & T_{p} & =
          R_{p}V_{1}\Sigma^{-\frac{1}{2}}, \\
        W_{i} & = L_{i}U_{2}\Theta^{-\frac{1}{2}}, & T_{i} & =
          R_{i}V_{2}\Theta^{-\frac{1}{2}}.
        \end{aligned}
      \end{align*}
    \State Compute the minimal balanced realization of the slow subsystem
      \begin{align*}
        (I_{\ell_{f}}, A_{f}, B_{f}, C_{f}, 0) & = (W_{p}^{T}ET_{p}, 
          W_{p}^{T}AT_{p}, W_{p}^{T}B, CT_{p}, 0).
      \end{align*}
    \State Choose the proper Hankel singular value $\varsigma_{r + 1}$.
    \State Permute and partition the Gramians of the slow subsystem
      \begin{align*}
        \check{\mathcal{G}}_{pc} = \check{\mathcal{G}}_{po} =
          \mathrm{diag}(\check{\Sigma}, \varsigma_{r + 1}I_{k}),
      \end{align*}
      and the corresponding system matrices
      \begin{align*}
        \begin{aligned}
          \check{A} & = \begin{bmatrix} A_{11} & A_{12} \\ A_{21} & A_{22} 
            \end{bmatrix}, & \check{B} & = \begin{bmatrix} B_{1} \\ B_{2} 
            \end{bmatrix}, & \check{C} & = \begin{bmatrix} C_{1}, & C_{2}
            \end{bmatrix}.
        \end{aligned}
      \end{align*}
    \State Compute the all-pass transformation
      \begin{align*}
        \tilde{E} & = \check{\Sigma}^{2} - \varsigma_{r+1}^{2}I_{\ell_{f}-k},\\
        \tilde{A} & = \varsigma_{r+1}^{2}A_{11}^{T} + 
          \check{\Sigma}A_{11}\check{\Sigma} + 
          \varsigma_{r+1}C_{1}^{T}UB_{1}^{T},\\
        \tilde{B} & = \check{\Sigma}B_{1} - 
          \varsigma_{r+1}C_{1}^{T}U,\\
        \tilde{C} & = C_{1}\check{\Sigma} - \varsigma_{r+1}UB_{1}^{T},\\
        \tilde{D} & = \varsigma_{r+1}U,
      \end{align*}
      with $U = (C_{2}^{T})^{\dagger}B_{2}$.
    \State Compute the additive decomposition
      \begin{align*}
        \tilde{G}(s) = \tilde{C}(s\tilde{E} - \tilde{A})\tilde{B} + 
          \tilde{D} = G_{h}(s) + F(s),
      \end{align*}
      where $F$ is anti-stable and $G_{h}$ stable with the realization 
      $(E_{h}, A_{h}, B_{h}, C_{h}, D_{h})$.
    \State Compute the balanced realization of the fast subsystem
      \begin{align*}
        (E_{\infty}, I_{\ell_{\infty}}, B_{\infty}, C_{\infty}, D) & = 
          (W_{i}^{T}ET_{i}, W_{i}^{T}AT_{i}, W_{i}^{T}B, CT_{i}, D).
      \end{align*}
    \State Couple the resulting subsystems
      \begin{align*}
        (\hat{E}, \hat{A}, \hat{B}, \hat{C}, \hat{D}) & = 
          \left(\begin{bmatrix} E_{h} & 0 \\ 0 & E_{\infty} 
          \end{bmatrix}, \begin{bmatrix} A_{h} & 0 \\ 0 & I_{\ell_{\infty}}
          \end{bmatrix}, \begin{bmatrix} B_{h} \\ B_{\infty}\end{bmatrix}, 
          \begin{bmatrix} C_{h}, & C_{\infty}\end{bmatrix}, D_{h} + D \right).
      \end{align*}
  \end{algorithmic}
\end{algorithm}

\section{Numerical Methods for GHNA}
\label{sec:methods}

\subsection{Approximate GHNA}

The GHNA method can quickly become numerically unstable.
This problem arises from the transformation formulas \eqref{eq:transform} 
for the construction of a scaled all-pass error transfer function.
It is easy to see that the diagonal matrix 
$\Gamma = \check{\Sigma}^{2} - \varsigma_{r+1}^{2}I_{n_{\min}-k}$ can
lead to large numerical errors for small proper Hankel singular values in
further computations.
This happens if either the chosen value $\varsigma_{r+1}$ or the 
remaining proper Hankel singular values in $\check{\Sigma}$ are very small.
One preventive measure was the usage of the descriptor system structure 
\eqref{eq:hankelsys} to avoid unnecessary scaling by $\Gamma$. 
In further considerations, only the case of too small remaining Hankel singular
values is treated.

Small proper Hankel singular values can arise from numerical errors during the 
computation of the minimal realization.
Therefor, one approach to solve this problem is to compute a smaller balanced 
truncation of the slow subsystem than the minimal realization such that
too small proper Hankel singular values are cut off.
In this case, an additional error is made since the balanced realization is only 
an approximation of the original system.
To get a measure for the additional error, let $G_{b}$ be the computed balanced 
truncation of order $n_{b}$ of the slow subsystem $G_{sp}$.
Then it has been shown in \cite{morGlo84} that in the Hankel semi-norm it holds
\begin{align} \label{eq:hankelbterror}
  \lVert G_{sp} - G_{b} \rVert_{H} & \leq 2\sum\limits_{k = n_{b} + 1}^{n_{f}}
    {\varsigma_{k}(G_{sp})},
\end{align}
with $n_{f}$ the order of the slow subsystem $G_{sp}$.
For the overall error, let $G = G_{sp} + P$ be the original descriptor system 
and $\tilde{G} = G_{b} + P_{b}$ the balanced realization with $G_{b}$ of order 
$n_{b}$.
The generalized Hankel-norm approximation is denoted by $\hat{G} = G_{h} + 
P_{b}$, where the $r$-th order standard Hankel-norm approximation $G_{h}$ was 
computed from the balanced realization $G_{b}$.
Using \eqref{eq:hankelbterror} one obtains
\begin{align} \label{eq:apprerror}
  \begin{aligned}
    \lVert G - \hat{G} \rVert_{H} & = \lVert G_{sp} + P - G_{h} - P_{b} 
      \rVert_{H}\\
    & = \lVert G_{sp} - G_{h} \rVert_{H}\\
    & = \lVert G_{sp} - G_{b} + G_{b} - G_{h} \rVert_{H} \\
    & \leq \lVert G_{b} - G_{h} \rVert_{H} + \lVert G_{sp} - G_{b} 
      \rVert_{H} \\
    & \leq \varsigma_{r + 1}(G_{b}) + 2\sum\limits_{k = n_{b} + 1}^{n_{f}}
      {\varsigma_{k}(G_{sp})}.
  \end{aligned}
\end{align}
Since balancing the system does not change the Hankel singular values, the 
Hankel singular values of $G_{b}$ and $G_{sp}$ are also the proper 
Hankel singular values of $G$.
The resulting error can be bounded by
\begin{align*}
  \lVert G - \hat{G} \rVert_{H} & \leq \varsigma_{r + 1}(G) + 
    2\sum\limits_{k = n_{b} + 1}^{n_{f}}{\varsigma_{k}(G)}.
\end{align*}
Concerning the $\mathcal{H}_{\infty}$-norm, the approach in \eqref{eq:apprerror} 
can be used to get
\begin{align*}
  \lVert G - \hat{G} \rVert_{\mathcal{H}_{\infty}} & \leq
    2\sum\limits_{k = r + 1}^{n_{f}}{\varsigma_{k}(G)},
\end{align*}
which is the same error bound as for the exact method.

This approximate version of the GHNA takes advantage of the use of the GBT(SR) 
method in form of the adaptive choice of the order $n_{b}$.
It is possible to choose the order $n_{b}$ with respect to the proper Hankel 
singular value $\varsigma_{r + 1}$ such that
\begin{align*}
  2\sum\limits_{k = n_{b} + 1}^{n_{f}}{\varsigma_{k}(G)} & \ll
    \varsigma_{r + 1}(G).
\end{align*}
In this case, the resulting additional error becomes negligible small concerning
the original Hankel semi-norm error.
But the corresponding matrix $\Gamma$ leads to a better conditioned problem.
The algorithmic adjustments in the implementation of the GHNA method are small,
since only the truncation of non-zero proper Hankel singular values has to be
allowed in the generalized balanced truncation method.
In this case, the $\Sigma_{2}$ term in \eqref{eq:svd1} with the undesired proper 
Hankel singular values is not zero and only the matrices $U_{1}$, $\Sigma_{1}$,
and $V_{1}$ are used for further computations.

Another advantage of the approximate algorithm can be found in the
computation of the balanced truncation.
The GBT(SR) method needs to scale the transformation matrices \eqref{eq:bttrans}
using the inverse remaining Hankel singular values which is more accurate if the
small proper Hankel singular values are truncated.
Also in the sense of computational costs, this approximate method has
advantages.
The further steps of the algorithm, i.e., the all-pass transformation and 
additive decomposition, are extremely costly for large matrices in terms of
computational time and memory usage.
Therefor, it is advantageous to already have a small balanced realization 
for the further computations.

\subsection{Application to Sparse Systems}
\label{sec:sparse}

A frequently appearing case in practice is the model reduction of large-scale 
sparse descriptor systems.
In this case, the system matrices $E$ and $A$ from the descriptor system
\eqref{eq:desc} are in a large-scale sparse form, i.e., the dimension $n$ is
large, the matrices can be stored using $\mathcal{O}(n)$ memory and the
matrix-vector multiplication can be computed in $\mathcal{O}(n)$ effort.
Often such matrices result from the discretization of partial differential 
equations.

The transformation into a balanced realization does not preserve the
sparsity of the system matrices.
Therefor, the GHNA method can only be adapted to sparse systems in the first two
steps.
This concerns the computation of the solutions of the generalized projected
Lyapunov equations \eqref{eq:gcalec}--\eqref{eq:gdaleo}.
It has been observed that the eigenvalues of the symmetric positive semidefinite 
solutions of Lyapunov equations with low-rank right-hand sides generally decay 
rapidly.
The same result holds for the generalized projected Lyapunov equations
\cite{Sty08}.
Therefor, the system Gramians can be approximated by low-rank Cholesky 
factorizations, e.g., $\mathcal{G}_{pc} \approx Z_{pc}Z_{pc}^{T}$ with $Z_{pc} 
\in \mathbb{R}^{n \times k}$ and $k \ll n$.

For the proper system Gramians, the computation is done by adapting existing
low-rank methods, e.g., Krylov subspace methods or low-rank ADI methods.
In this case, the right-hand side has to be replaced by the projected form
from the Lyapunov equations \eqref{eq:gcalec}, \eqref{eq:gcaleo}.
Additionally, it is recommended to project the solution back into the
corresponding subspace after some steps of the methods due to a drift-off effect.

In contrast to this, for the improper system Gramians full-rank factorizations
can be constructed explicitly such that $G_{ic} = Z_{ic}Z_{ic}^{T}$ and
$G_{io} = Z_{io}Z_{io}^{T}$, with
\begin{align*}
  Z_{ic} & =\begin{bmatrix} Q_{r}A^{-1}B, & A^{-1}EQ_{r}A^{-1}B, & 
    \ldots, & (A^{-1}E)^{\nu - 1}Q_{r}A^{-1}B \end{bmatrix},\\
  Z_{io} & = \begin{bmatrix} Q_{\ell}^{T}A^{-T}C^{T}, & A^{-T}E^{T}
    Q_{\ell}^{T}A^{-T}C^{T}, & \ldots, & (A^{-T}E^{T})^{\nu - 1}
    Q_{\ell}^{T}A^{-T}C^{T} \end{bmatrix};
\end{align*}
see \cite{Sty08} for more details.
Thereby, the size of the full-rank factorizations is bounded by the number of
inputs $m$ or outputs $p$ times the system's index $\nu$.
This corresponds to the overall bound of the non-zero improper Hankel singular
values \eqref{eq:impbound}.

Still for using these methods, the spectral projections $P_{\ell}$, $P_{r}$,
$Q_{\ell}$ and $Q_{r}$ have to be computed. But for many problems, these spectral
projections can be applied by exploiting the special structure of the
problem; see \cite{Sty08} for some examples.

\subsection{The Projection-Free Approach}
\label{sec:projfree}

In case of unstructured problems, there are no explicit construction formulas for
the spectral projectors $P_{\ell}$, $P_{r}$, $Q_{\ell}$ and $Q_{r}$, so they have
to be explicitly computed for the use in the generalized projected Lyapunov
equations \eqref{eq:gcalec}--\eqref{eq:gdaleo}.
But as for the GBT(SR) method, an alternative approach to the 
use of spectral projectors can be given; see \cite{morSty04}.

As already used in the GHNA algorithm, the GBT method can be interpreted as a 
decoupling of the original system into the slow and fast subsystems and the 
individual reduction of both.
Therefor, consider the following generalized block triangular form.
There are orthogonal matrices $U, V \in \mathbb{R}^{n \times n}$ such that
\begin{align*}
  \begin{aligned}
    E & = V \begin{bmatrix} E_{f} & E_{u} \\ 0 & E_{\infty} \end{bmatrix} 
      U^{T} & \text{and} && A & = V \begin{bmatrix} A_{f} & A_{u} \\ 0 & 
      A_{\infty}\end{bmatrix} U^{T},
  \end{aligned}
\end{align*}
where the matrix pencil $\lambda E_{f} - A_{f}$ contains all the finite 
eigenvalues of $\lambda E - A$ and the matrix pencil $\lambda E_{\infty} -
A_{\infty}$ has only infinite eigenvalues.
For the computation of a block diagonalization of the system, the coupled
Sylvester equations
\begin{align} \label{eq:coupsylv}
  E_{f}Y - ZE_{\infty} & = -{E_{u}},\\
  A_{f}Y - ZA_{\infty} & = -{A_{u}},
\end{align}
have to be solved for $Y$ and $Z$; see \cite{morBenQQ05}.
Using all of these matrices for the restricted system equivalence transformation
\begin{align*}
  \begin{aligned}
    W_{dec} & = V \begin{bmatrix} I_{n_{f}} & 0 \\ -Z^{T} & I_{n_{\infty}} 
      \end{bmatrix}, &
    T_{dec} & = U \begin{bmatrix} I_{n_{f}} & Y \\ 0 & I_{n_{\infty}}
      \end{bmatrix}
  \end{aligned}
\end{align*}
of the original descriptor system \eqref{eq:desc}, one obtains
\begin{align} \label{eq:diagdesc}
  \begin{aligned}
    \begin{bmatrix} E_{f} & 0 \\ 0 & E_{\infty} \end{bmatrix} 
      \dot{\tilde{x}}(t) & = \begin{bmatrix} A_{f} & 0 \\ 0 & A_{\infty} 
      \end{bmatrix}\tilde{x}(t) + \begin{bmatrix} B_{f} \\ B_{\infty} 
      \end{bmatrix}u(t), \\
    y(t) & = \begin{bmatrix} C_{f} & C_{\infty} \end{bmatrix}\tilde{x}(t) 
      + Du(t),
  \end{aligned}
\end{align}
where the remaining matrices are constructed as
\begin{align} \label{eq:blkright}
  \begin{aligned} 
    V^{T}B & = \begin{bmatrix} B_{u} \\ B_{\infty} \end{bmatrix}, &
      B_{f} & = B_{u} - ZB_{\infty},\\
    CU & = \begin{bmatrix} C_{f} \\ C_{u} \end{bmatrix}, &
      C_{\infty} & = C_{f}Y + C_{u}.
  \end{aligned}
\end{align}
Obviously, the realization in \eqref{eq:diagdesc} decouples into the fast and 
slow subsystems of \eqref{eq:desc}.
Since the spectral projectors of the subsystems are identity matrices, the 
corresponding Lyapunov equations \eqref{eq:gcalec}--\eqref{eq:gdaleo} simplify to
\begin{align*}
  E_{f}X_{pc}A_{f}^{T} + A_{f}X_{pc}E_{f}^{T} + B_{f}B_{f}^{T} & = 0,\\
  E_{f}^{T}X_{po}A_{f} + A_{f}^{T}X_{po}E_{f} + C_{f}^{T}C_{f} & = 0,
\end{align*}
for the slow subsystem and
\begin{align*}
  A_{\infty}X_{ic}A_{\infty}^{T} - E_{\infty}X_{ic}E_{\infty}^{T} - 
    B_{\infty}B_{\infty}^{T} & = 0,\\
  A_{\infty}^{T}X_{io}A_{\infty} - E_{\infty}^{T}X_{io}E_{\infty} - 
    C_{\infty}^{T}C_{\infty} & = 0,
\end{align*}
for the fast subsystem.
These Lyapunov equations can be computed without the spectral projections.
The matrices $X_{pc}$ and $X_{po}$ correspond to the parts of the proper
controllability and observability Gramians, which contain the potentially
non-zero proper Hankel singular values.
The same holds for $X_{ic}$, $X_{io}$ and the improper system Gramians.
For the rest of the algorithm, only the transformations have to be restricted to
the subsystems.

The projection-free approach is implemented in the version 3.0 of the MORLAB
toolbox; see \cite{morBenW17a}.
In this special implementation, the block diagonalization of the system is done 
by using a block transformation approach based on the following generalization 
of Theorem 4.1 from \cite{KagV92}.

\begin{theorem} \label{thm:adtf}
  Let $\Gamma \subset \mathbb{C}$ be a region in the complex plane which
  contains $n_{1}$ eigenvalues of the matrix pencil $\lambda E - A$.
  Let $Q, Z \in \mathbb{R}^{n \times n}$ be orthogonal matrices that
  transform the matrix pencil $\lambda E - A$ into the upper block triangular
  form:
  \begin{align*}
    Q^{T}(\lambda E - A)Z = \begin{bmatrix}Q_{1}^{T} \\ Q_{2}^{T}\end{bmatrix}
      (\lambda E - A)\begin{bmatrix}Z_{1}, Z_{2} \end{bmatrix} = 
      \begin{bmatrix}\lambda E_{11}^{(1)} - A_{11}^{(1)} & 
      \lambda E_{12}^{(1)} - A_{12}^{(1)} \\ 0 & \lambda E_{22}^{(1)} - 
      A_{22}^{(1)}\end{bmatrix},
  \end{align*}
  with $\Lambda(A_{11}^{(1)}, E_{11}^{(1)}) \subseteq \Gamma$ and 
  $\Lambda(A_{11}^{(1)}, E_{11}^{(1)}) \cap 
  \Lambda(A_{22}^{(1)}, E_{22}^{(1)}) = \emptyset$. 
  Similarly, let $U, V \in \mathbb{R}^{n \times n}$ be orthogonal matrices that
  transform the matrix pencil $\lambda E - A$ into the upper block triangular
  form:
  \begin{align*}
    U^{T}(\lambda E - A)V = \begin{bmatrix}U_{1}^{T} \\ 
      U_{2}^{T}\end{bmatrix}(\lambda E - A)\begin{bmatrix}V_{1}, V_{2} 
      \end{bmatrix} = \begin{bmatrix}\lambda E_{11}^{(2)} - A_{11}^{(2)} & 
      \lambda E_{12}^{(2)} - A_{12}^{(2)} \\ 0 & \lambda E_{22}^{(2)} - 
      A_{22}^{(2)}\end{bmatrix},
  \end{align*}
  with $\Lambda(A_{22}^{(2)}, E_{22}^{(2)}) \subseteq \Gamma$ and
  $\Lambda(A_{11}^{(2)}, E_{11}^{(2)}) \cap \Lambda(A_{22}^{(2)}, 
  E_{22}^{(2)}) = \emptyset$.
  Then
  \begin{align*}
    \begin{aligned}
      X & = \begin{bmatrix}U_{2}, & Q_{2} \end{bmatrix} && \mathrm{and} &
      Y & = \begin{bmatrix}Z_{1}, & V_{1} \end{bmatrix}
    \end{aligned}
  \end{align*}
  are transformation matrices, such that $X^{T}(\lambda E - A)Y$ has a block
  diagonal structure where the upper block contains the $n_{1}$ eigenvalues
  lying inside $\Gamma$ and the lower block has the remaining $n - n_{1}$
  eigenvalues of $\lambda E - A$ outside of $\Gamma$.
\end{theorem}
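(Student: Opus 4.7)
The plan is to verify three things in sequence: that the off-diagonal blocks of $X^{T}(\lambda E - A)Y$ vanish, that $X$ and $Y$ are indeed invertible, and that the two diagonal blocks carry the correct spectral parts. Throughout I will use the key identities extracted from the block-triangular forms, namely $EZ_{1} = Q_{1}E_{11}^{(1)}$, $AZ_{1} = Q_{1}A_{11}^{(1)}$ (so that $\mathrm{span}(Z_{1})$ is the right deflating subspace for the eigenvalues in $\Gamma$, with image $\mathrm{span}(Q_{1})$), and similarly $EV_{1} = U_{1}E_{11}^{(2)}$, $AV_{1} = U_{1}A_{11}^{(2)}$ (so that $\mathrm{span}(V_{1})$ is the right deflating subspace for the eigenvalues outside $\Gamma$, with image $\mathrm{span}(U_{1})$).

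For the off-diagonal blocks I would simply compute: the $(1,2)$-block is
\begin{align*}
  U_{2}^{T}(\lambda E - A)V_{1} = (U_{2}^{T}U_{1})(\lambda E_{11}^{(2)} - A_{11}^{(2)}) = 0,
\end{align*}
because $U$ is orthogonal, and symmetrically the $(2,1)$-block equals $(Q_{2}^{T}Q_{1})(\lambda E_{11}^{(1)} - A_{11}^{(1)}) = 0$. These are the mechanical parts.

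The spectral identification of the diagonal blocks and the invertibility of $X$ and $Y$ both hinge on the same classical fact for a regular pencil with disjoint spectra on $\Gamma$ and $\mathbb{C}\setminus\Gamma$: the corresponding right deflating subspaces form a direct sum equal to $\mathbb{R}^{n}$, and the same holds for the left deflating subspaces. Applied to $Y = [Z_{1}, V_{1}]$ this gives $\mathrm{span}(Z_{1}) \cap \mathrm{span}(V_{1}) = \{0\}$, and since the column counts add up to $n$, $Y$ is non-singular. For $X = [U_{2}, Q_{2}]$ I would use that $\mathrm{span}(Q_{1})$ and $\mathrm{span}(U_{1})$ are left deflating subspaces for the two disjoint eigenvalue groups, so $\mathrm{span}(Q_{1}) \oplus \mathrm{span}(U_{1}) = \mathbb{R}^{n}$; taking orthogonal complements yields $\mathrm{span}(Q_{2}) \cap \mathrm{span}(U_{2}) = \{0\}$ and their dimensions sum to $n$, hence $X$ is invertible. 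The same direct-sum fact shows that the $n_{1} \times n_{1}$ matrix $U_{2}^{T}Q_{1}$ is non-singular (any $q \in \mathrm{span}(Q_{1})$ with $U_{2}^{T}q = 0$ would lie in $\mathrm{span}(U_{1}) \cap \mathrm{span}(Q_{1}) = \{0\}$), and likewise $Q_{2}^{T}U_{1}$ is non-singular. Then the $(1,1)$-block becomes
\begin{align*}
  U_{2}^{T}(\lambda E - A)Z_{1} = (U_{2}^{T}Q_{1})(\lambda E_{11}^{(1)} - A_{11}^{(1)}),
\end{align*}
which, being a non-singular constant left factor times $\lambda E_{11}^{(1)} - A_{11}^{(1)}$, shares the spectrum of the latter and thus carries exactly the $n_{1}$ eigenvalues inside $\Gamma$. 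An analogous computation shows the $(2,2)$-block $Q_{2}^{T}(\lambda E - A)V_{1} = (Q_{2}^{T}U_{1})(\lambda E_{11}^{(2)} - A_{11}^{(2)})$ has the remaining $n - n_{1}$ eigenvalues outside $\Gamma$.

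I expect the main obstacle to be a clean appeal to, or statement of, the direct-sum property of deflating subspaces for disjoint spectra, since it is what simultaneously powers invertibility of $X$, $Y$, and the two cross-product matrices $U_{2}^{T}Q_{1}$ and $Q_{2}^{T}U_{1}$; once that lemma is in hand, every remaining step is either a three-line block computation or an orthogonality identity coming directly from $Q^{T}Q = I$ and $U^{T}U = I$.
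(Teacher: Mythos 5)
The paper does not actually prove Theorem~\ref{thm:adtf}; it defers to \cite[Section~5.2]{morWer16}, so there is no in-text proof to compare against, but your argument is correct and is the standard one for this K{\aa}gstr{\"o}m--Van~Dooren-type spectral division: off-diagonal blocks vanish by $Q_2^TQ_1=0$ and $U_2^TU_1=0$, while invertibility of $X$, $Y$, $U_2^TQ_1$, and $Q_2^TU_1$ all follow from the complementarity of the right (resp.\ left) deflating subspaces associated with disjoint spectral sets of a regular pencil. The only thing to make explicit is that this complementarity lemma is exactly where regularity of $\lambda E - A$ is used, and that the hypotheses must be read as saying the $(2,2)$ block of the second decomposition carries \emph{all} $n_1$ eigenvalues in $\Gamma$ (so that $\Lambda(A_{11}^{(2)},E_{11}^{(2)})$ is precisely the complementary spectrum); with that reading your proof is complete.
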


\begin{proof}
  The proof can be found in \cite[Section~5.2]{morWer16}.
\end{proof}

In contrast to the approach above, it is not necessary to compute the solution
of the coupled Sylvester equations and, due to the block orthogonal structure
of the transformation matrices, the right-hand sides are usually better
conditioned than \eqref{eq:blkright}.
In MORLAB, the right matrix pencil disk function method is used to generate the
block transformation matrices, see \cite{morWer16} for more details on the
implementation.
Additionally, Theorem \ref{thm:adtf} can be used to compute the additive
decomposition in step 9 of Algorithm \ref{alg:ghna} by separating the eigenvalues
with negative and positive real-parts.

\section{Numerical Examples}
\label{sec:example}

Two examples have been chosen to demonstrate the introduced GHNA method.
All the computations were done on a machine with one Intel(R) Core(TM) i7-6700
CPU processor running at 3.40GHz and equipped with 8 GB total main memory. 
The computer is running on Ubuntu 16.04.4 LTS and uses MATLAB 9.1.0.441655 
(R2016b).

\subsection{Semi-Discretized Stokes Equation}

First, the method is tested on a large-scale sparse example.
The Stokes equation describes the flow of fluids at very low velocities without 
convection and coincides with the linearization of the Navier-Stokes equation 
around the zero-state.
The spatial discretization of the Stokes equation by the finite volume method 
leads to a descriptor system of the form
\begin{align} \label{eq:discstokes}
  \begin{aligned}
    \dot{v}_{h}(t) & = A_{11}v_{h}(t) + A_{12}p_{h}(t) + B_{1}u(t),\\
    0 & = A_{12}^{T}v_{h}(t) \phantom{{}+ A_{12}p_{h}(t)}+ B_{2}u(t),\\
    y(t) & = C_{1}v_{h}(t) + C_{2}p_{h}(t),
  \end{aligned}
\end{align}
where $v_{h}$ and $p_{h}$ are the semi-discretized vectors of velocity and 
pressure, respectively, and the matrices $B_{1}$, $B_{2}$, $C_{1}$, $C_{2}$ are 
all vectors.
For matrix pencils like in \eqref{eq:discstokes} the spectral projectors 
$P_{\ell}$ and $P_{r}$ are given by explicit construction formulas
\begin{align*}
  P_{\ell} & = \begin{bmatrix} \Pi & -\Pi A_{11}A_{12}(A_{12}^{T}A_{12})^{-1} 
    \\ 0 & 0 \end{bmatrix},\\
  P_{r} & = \begin{bmatrix} \Pi & 0 \\ -(A_{12}^{T}A_{12})^{-1}A_{12}^{T}
    A_{11} \Pi & 0 \end{bmatrix},
\end{align*}
where $\Pi = I_{n_{v}} - A_{12}(A_{12}^{T}A_{12})^{-1}A_{12}^{T}$ is the 
orthogonal projector onto the kernel of $A_{12}^{T}$ along the image of 
$A_{12}$; see \cite{morSty06}.
The generation of data is based on the test example 3.3 in \cite{morSch07}.
The Stokes equation was discretized on a uniform staggered grid of 
$80 \times 80$ points which leads to a descriptor system of the size
$n = 19,039$, where the matrix pencil $\lambda E - A$ has $n_{f} = 6,241$ 
finite and $n_{\infty} = 12,798$ infinite eigenvalues.
The data was generated to get a full-rank $A_{12}$ such that the system 
\eqref{eq:discstokes} is of index $2$.

\begin{figure}[t]
  \centering
  \begin{tikzpicture}
  \begin{semilogyaxis}[%
    width  = 0.5\textwidth,
    height = 0.25\textwidth,
    scale only axis = true,
    xmin = 0,
    xmax = 35,
    ymin = 1e-12,
    ymax = 1,
    yminorticks=true,
    ticklabel style = {font = \small},
    yminorticks     = true,
    x label style   = {font = \small, yshift = 0.5em},
    legend columns  = 2,
    legend style    = {
      at={(0.5,1.05)},
      anchor = south,
      nodes  = right,
      font   = \small}
  ]

    \addplot[
      color      = myBlue,
      line width = 1.5pt,
      solid,
      mark         = *,
      mark size    = 1.5pt,
      mark options = {solid, fill = myBlue}
    ]
    coordinates{
      (1,0.691897433940246)
      (2,0.0565730511376398)
      (3,0.0232125663649226)
      (4,0.00184869954523853)
      (5,0.0011355422719378)
      (6,0.000528088393260704)
      (7,0.000173938938417834)
      (8,0.000163480773969617)
      (9,0.000134751404557291)
      (10,1.75440328663032e-05)
      (11,1.73085825060211e-05)
      (12,1.45364094690893e-05)
      (13,1.42605574247256e-05)
      (14,1.37886115060933e-05)
      (15,1.22598598485555e-05)
      (16,1.77452719863269e-06)
      (17,1.76390859264618e-06)
      (18,1.44686364477736e-06)
      (19,1.41551994521706e-06)
      (20,8.64586959313895e-07)
      (21,6.63030101693712e-07)
      (22,3.25416024168523e-08)
      (23,3.20179186830052e-08)
      (24,2.98286015982947e-08)
      (25,1.02720418298425e-08)
      (26,6.93139076155707e-09)
      (27,4.48361577284539e-09)
      (28,4.77450940247874e-11)
      (29,4.43697101623533e-11)
      (30,1.3339197715864e-11)
      (31,1.15991472808279e-11)
      (32,1.73034626174197e-12) 
   };
   \addlegendentry{Normalized residual of $Z_{pc}$};

    \addplot [
      color      = myRed,
      line width = 1.5pt,
      solid,
      mark         = triangle*,
      mark size    = 1.5pt,
      mark options = {solid, fill = myRed}
    ]
    coordinates{
      (1,0.576191415735752)
      (2,0.111058967354886)
      (3,0.0708271124082398)
      (4,0.00712508267741902)
      (5,0.00549382825084091)
      (6,0.00490533417112488)
      (7,0.00274734859337801)
      (8,0.00264034293170628)
      (9,0.00237225790916969)
      (10,0.000640637710074854)
      (11,0.000633627453096434)
      (12,0.000522527028931933)
      (13,0.000514015954611262)
      (14,0.000510605848369983)
      (15,0.000356467776851027)
      (16,0.000355248615873752)
      (17,3.22157554993085e-05)
      (18,3.15983798157106e-05)
      (19,1.66120500055274e-05)
      (20,1.65044440247935e-05)
      (21,1.34139323590155e-05)
      (22,1.33865901662087e-05)
      (23,1.90679448545735e-07)
      (24,1.86990941801148e-07)
      (25,8.36856455160151e-08)
      (26,1.90414551771966e-08)
      (27,1.63246934873726e-08)
      (28,5.28580734362742e-11)
      (29,5.15619273624122e-11)
      (30,1.14248826108031e-11)
      (31,5.82460564390369e-12) 
    };
    \addlegendentry{Normalized residual of $Z_{po}$};

  \end{semilogyaxis}
\end{tikzpicture}
  \caption{Convergence of the low-rank ADI iterations for the low-rank
    factorizations $\mathcal{G}_{pc} = Z_{pc}Z_{pc}^{T}$ and
    $\mathcal{G}_{po} = Z_{po}Z_{po}^{T}$ (Stokes example).}
  \label{fig:stokes_conv}
\end{figure}
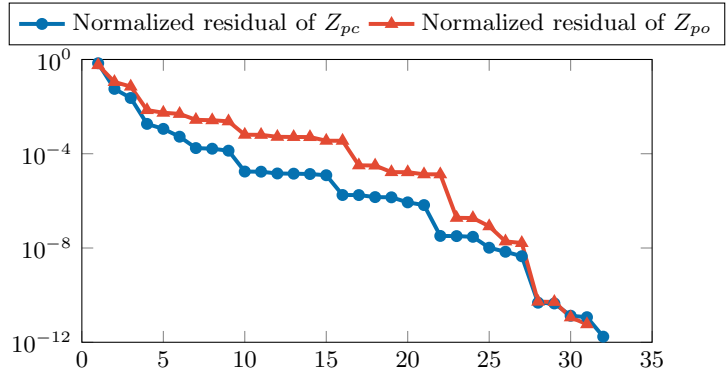

For the computation, the implementation of the GHNA method was adjusted to the 
sparse system case, as described in Section \ref{sec:sparse}, and for the
solution of the projected continuous-time Lyapunov equations \eqref{eq:gcalec}
and \eqref{eq:gcaleo}, the solvers from version 1.0.1 of the M-M.E.S.S.
toolbox have been used \cite{SaaKB16-mmess-1.0.1}.
See the demo file \texttt{bt\_mor\_DAE2.m} in \cite{SaaKB16-mmess-1.0.1}
for the used parameter settings.
With these adjusted solvers, the two iterations for the low-rank factors 
quickly converged after $31$ and $32$ iteration steps as shown in Figure
\ref{fig:stokes_conv}.
An approximation of the non-zero proper Hankel singular values has been computed
and plotted in Figure \ref{fig:stokes_hsvp} using the low-rank factorizations of 
the proper system Gramians.

\begin{figure}[t]
  \centering
  \begin{tikzpicture}
  \begin{semilogyaxis}[%
    width  = 0.5\textwidth,
    height = 0.25\textwidth,
    scale only axis = true,
    xmin  = 0,
    xmax  = 35,
    ymin  = 1e-20,
    ymax  = 10,
    ytick = {1e-20,1e-15,1e-10,1e-5,1e0},
    ticklabel style = {font = \small},
    yminorticks     = true,
    x label style   = {font = \small, yshift = 0.5em},
    legend columns  = 2,
    legend style    = {
      at={(0.5,1.05)},
      anchor = south,
      nodes  = right,
      font   = \small}
  ]

    \addplot[
      color = myBlue,
      mark  = *,
      mark size = 1.5pt,
      only marks
    ] coordinates{
      (1,0.0883420876120351)
      (2,0.00100377679795893)
      (3,0.000311415082555621)
      (4,1.36981651504174e-05)
      (5,1.83695800248467e-06)
      (6,4.74567802100856e-07)
      (7,9.09488297251441e-08)
      (8,1.02848816877585e-08)
      (9,5.80074169859674e-09)
      (10,2.17633690559475e-09)
      (11,5.82082749690712e-11)
      (12,2.94508714174176e-11)
      (13,6.79512018556902e-12)
      (14,2.39118136200952e-12)
      (15,4.56655629398539e-13)
      (16,2.02652348577651e-13)
      (17,7.47896731710239e-14)
      (18,6.88110943293762e-15)
      (19,5.8247096251066e-15)
      (20,6.1588982221573e-16)
      (21,1.9534902215996e-16)
      (22,1.817793911938e-16)
      (23,8.82685441380872e-18)
      (24,8.82685441380872e-18)
      (25,8.82685441380872e-18)
      (26,8.82685441380872e-18)
      (27,8.82685441380872e-18)
      (28,8.82685441380872e-18)
      (29,8.82685441380872e-18)
      (30,8.82685441380872e-18)
      (31,8.7430218861556e-18)
    };
    \addlegendentry{Proper Hankel singular values};

    \addplot[
      color      = myRed,
      line width = 1.5pt,
      solid
    ] coordinates {
      (0,1.93299722058063e-16)
      (35,1.93299722058063e-16)
    };
    \addlegendentry{Balanced realization};
      
  \end{semilogyaxis}
\end{tikzpicture}
  \caption{Computed proper Hankel singular values and the tolerance for the 
    balanced realization (Stokes example).}
  \label{fig:stokes_hsvp}
\end{figure}
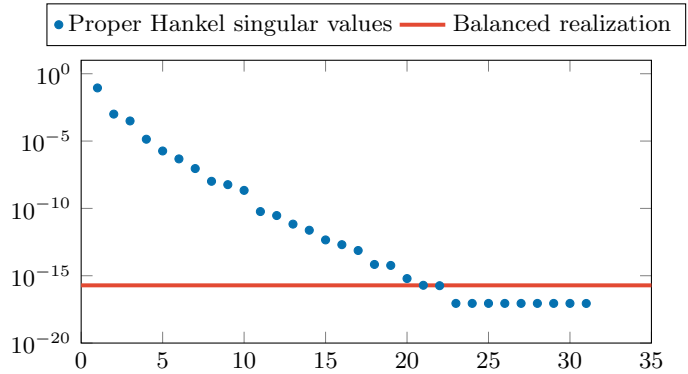

As mentioned before, it is numerically more stable using a balanced truncation 
of the slow subsystem than the minimal realization.
For this reason, a tolerance for the allowed proper Hankel singular values was 
computed as $\mathrm{log}(n) \cdot \epsilon$ and multiplied with the largest
proper Hankel singular value, with $n$ the order of the system and $\epsilon$
the machine epsilon.
The resulting bound is also shown in Figure \ref{fig:stokes_hsvp} and the 
computed balanced realization is of order $21$.

To compute a fourth order standard Hankel-norm approximation of the slow 
subsystem, the fifth proper Hankel singular value $\varsigma_{5} = 1.8370 \cdot 
10^{-6}$ was chosen.
The additive decomposition of the transformed realization \eqref{eq:transform}
was made by using the \texttt{ml\_adtf\_dss} routine from version 3.0 of the
MORLAB toolbox \cite{morBenW17a}.
The projected generalized discrete-time Lyapunov equations \eqref{eq:gdalec} 
and \eqref{eq:gdaleo} were constructed as shown in Section \ref{sec:sparse}.
In contrast to the con\-tin\-u\-ous-time case, every iteration step was 
reprojected since the iteration converges after $2$ steps at maximum.
As result only one non-zero improper Hankel singular value $\theta_{1} = 5.3046 
\cdot 10^{-18}$ was computed.
This implies that the reduced-order system would be of index $1$.
In this case, the fast subsystem \eqref{eq:fast} is equivalent to a feed-through
term of the form $-C_{\infty}B_{\infty} = -1.875 \cdot 10^{-17}.$
Since this value is negligible small compared to the resulting feed-through term
$\hat{D} = \varsigma_{5}$ from the GHNA method, the state corresponding to this 
improper Hankel singular value was truncated, too.

\begin{figure}[t]
  \centering
  \input{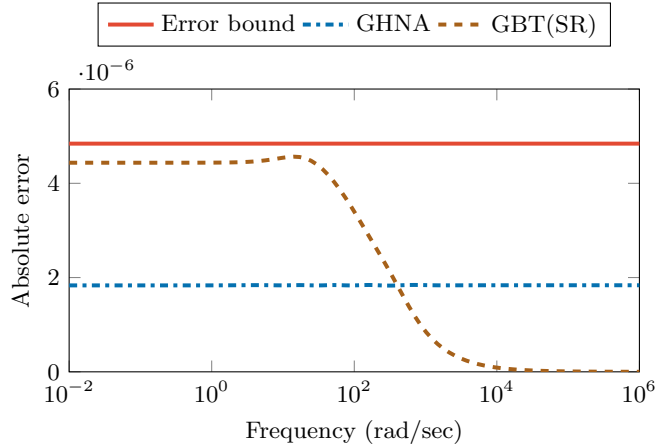}
  \caption{Absolute error of the GHNA and GBT(SR) transfer functions in the 
    spectral norm with the corresponding error bound (Stokes example).}
  \label{fig:stokes_r5_error}
\end{figure}

To sum up, the original semi-discretized Stokes equation is approximated by a
GHNA of order $4$ ($r = 4$, $\ell_{\infty} = 0$).
The error of the original and reduced-order transfer functions in the spectral
norm can be seen in Figure \ref{fig:stokes_r5_error}.
Additionally, the corresponding $\mathcal{H}_{\infty}$ error bound as well as a 
reduced-order model of the same order computed by the GBT(SR) method are plotted
to get an impression of the approximation behavior.
The shown error behavior of the GHNA is typical.
Since the reduced-order model is based on an all-pass error transfer function, 
the error behavior becomes nearly all-pass if the influence of the anti-stable 
part is negligible small.
Also, the error of the GHNA approaches the chosen proper Hankel singular value 
$\varsigma_{5}$, which is exactly the error of the approximation in the 
Hankel semi-norm.

Further examples and tests of the sparse implementation of the GHNA method can
be found in \cite{morWer16}.

\subsection{A Damped Mass-Spring System}

As a second example, a damped mass-spring system with a holonomic constraint
is considered here.
The detailed construction of the system can be found in \cite{morMehS05}.
The vibrations of the resulting system are described by a system of second-order
equations
\begin{align} \label{eq:msd}
  \begin{aligned}
    M\ddot{p}(t) & = Kp(t) + D\dot{p}(t) - G^{T}\lambda(t) + B_{u}u(t),\\
    0 & = Gp(t),\\
    y(t) & = C_{p}p(t),
  \end{aligned}
\end{align}
where $p(t)$ is the position vector, $\lambda(t) \in \mathbb{R}$ is the 
Lagrange multiplier, $K, D \in \mathbb{R}^{g \times g}$ are the tridiagonal
stiffness and damping matrices, $M = \mathrm{diag}(m_{1}, \ldots, m_{g})$ is
the mass matrix and $G = \begin{bmatrix} 1, 0, \ldots, 0, -1 \end{bmatrix}$
is the constraint matrix.
The input matrix is given by $B_{u} = e_{1}$ and three positions of masses are
measured by $C_{p} = \begin{bmatrix} e_{1}, & e_{2}, & e_{g-1}
\end{bmatrix}^{T}$, where $e_{i}$ is the $i$-th column of $I_{g}$.

For the application of the GHNA method, the system \eqref{eq:msd} has to be
rewritten in first-order form.
Therefor, the velocity vector $v(t) = \dot{p}(t)$ is introduced and all states 
are collected in $x(t) = \begin{bmatrix} p(t)^{T}, v(t)^{T}, \lambda(t) 
\end{bmatrix}^{T}$, such that the system \eqref{eq:msd} can be rewritten in the 
form
\begin{align} \label{eq:msdlin}
  \begin{aligned}
    \begin{bmatrix} I_{g} & 0 & 0 \\ 0 & M & 0 \\ 0 & 0 & 0 \end{bmatrix} 
      \dot{x}(t) & = \begin{bmatrix} 0 & I_{g} & 0 \\ K & D & -G^{T} \\ 
      G & 0 & 0\end{bmatrix} x(t) + \begin{bmatrix} 0 \\ B_{u} \\ 0 \end{bmatrix}
      u(t),\\
    y(t) & = \begin{bmatrix} C_{p} & 0 & 0 \end{bmatrix} x(t).
  \end{aligned}
\end{align}
This linearization is an index-$3$ descriptor system.
The number of masses was chosen as $g = 1500$, which leads to $n = 3001$ states
in the linearized system \eqref{eq:msdlin}.
For the computation of the GHNA, the \texttt{ml\_hna\_dss} method from
version 3.0 of the MORLAB toolbox has been used \cite{morBenW17a}.
In this function, the projection-free approach from Section \ref{sec:projfree}
is implemented as mentioned there.
For the computation of the additive decompositions, the right matrix pencil disk
function is used and the generalized Lyapunov equations are solved via the
matrix sign function method; see, for example, \cite{BaiDG97} and
\cite{morBenQ05}.
More details on handling descriptor systems with the MORLAB toolbox can
be found in \cite{morBenW18}.
The computed proper Hankel singular values and the used bound
for the minimal realization of the system can be seen in Figure
\ref{fig:msd_hsvp}.

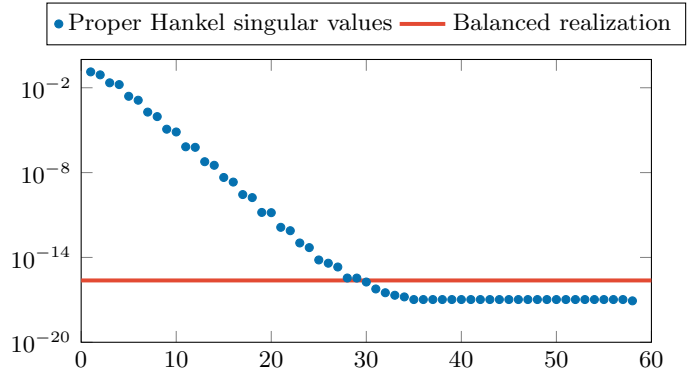
\begin{figure}[t]
  \centering
  \begin{tikzpicture}
  \begin{semilogyaxis}[%
    width  = 0.5\textwidth,
    height = 0.25\textwidth,
    scale only axis = true,
    xmin = 0,
    xmax = 60,
    ymin = 1e-20,
    ymax = 1,
    yminorticks = true,
    ticklabel style = {font = \small},
    yminorticks     = true,
    x label style   = {font = \small, yshift = 0.5em},
    legend columns  = 2,
    legend style    = {
      at={(0.5,1.05)},
      anchor = south,
      nodes  = right,
      font   = \small}
  ]
  
    \addplot[
      color = myBlue,
      mark  = *,
      mark size = 1.5pt,
      only marks
    ]
    coordinates{
      (1,0.132660674242346)
      (2,0.0818954168249717)
      (3,0.0223678089134308)
      (4,0.016765444371448)
      (5,0.00249444991948758)
      (6,0.00131402385574456)
      (7,0.000190040410065211)
      (8,9.13012923684003e-05)
      (9,1.17341140022859e-05)
      (10,7.45174441739626e-06)
      (11,6.68948790698851e-07)
      (12,6.1256653030404e-07)
      (13,5.88946161780223e-08)
      (14,3.2872139547875e-08)
      (15,4.52355933242356e-09)
      (16,2.13490316834834e-09)
      (17,2.79563291471503e-10)
      (18,1.70566238722064e-10)
      (19,1.52333015692605e-11)
      (20,1.46482038070353e-11)
      (21,1.34929163102116e-12)
      (22,7.77916250062003e-13)
      (23,1.05275089628259e-13)
      (24,4.95564641930703e-14)
      (25,6.63034318097894e-15)
      (26,3.90193166681088e-15)
      (27,2.10461250397863e-15)
      (28,3.50934063730237e-16)
      (29,3.47196651714326e-16)
      (30,1.82039746861215e-16)
      (31,5.89131739972402e-17)
      (32,3.184347156335e-17)
      (33,2.1121221230151e-17)
      (34,1.61484739494048e-17)
      (35,1.05084479396785e-17)
      (36,1.05084479396785e-17)
      (37,1.05084479396785e-17)
      (38,1.05084479396785e-17)
      (39,1.05084479396785e-17)
      (40,1.05084479396785e-17)
      (41,1.05084479396785e-17)
      (42,1.05084479396785e-17)
      (43,1.05084479396785e-17)
      (44,1.05084479396785e-17)
      (45,1.05084479396785e-17)
      (46,1.05084479396785e-17)
      (47,1.05084479396785e-17)
      (48,1.05084479396785e-17)
      (49,1.05084479396785e-17)
      (50,1.05084479396785e-17)
      (51,1.05084479396785e-17)
      (52,1.05084479396785e-17)
      (53,1.05084479396785e-17)
      (54,1.05084479396785e-17)
      (55,1.05084479396785e-17)
      (56,1.05084479396785e-17)
      (57,1.05084479396785e-17)
      (58,8.514753564097e-18) 
    };
    \addlegendentry{Proper Hankel singular values};

    \addplot[
      color      = myRed,
      line width = 1.5pt,
      solid
    ]
    coordinates{
      (0,2.35850080046537e-16)
      (60,2.35850080046537e-16)
    };
    \addlegendentry{Balanced realization};

  \end{semilogyaxis}
\end{tikzpicture}
  \caption{Computed proper Hankel singular values and the tolerance for the 
    balanced realization (mass-spring example).}
  \label{fig:msd_hsvp}
\end{figure}

The computed reduced-order model is of order $6$ ($r = 6$, $\ell_{\infty} = 0$).
So also in this case, the reduced-order model does not contain algebraic
constraints anymore, which means the $\hat{E}$ matrix is regular.
The absolute error of the GHNA is plotted in Figure \ref{fig:msd_r6_error}
with the corresponding $\mathcal{H}_{\infty}$ error bound and the error of the
GBT(SR) reduced-order model for comparison.

\begin{figure}[t]
  \centering
  \input{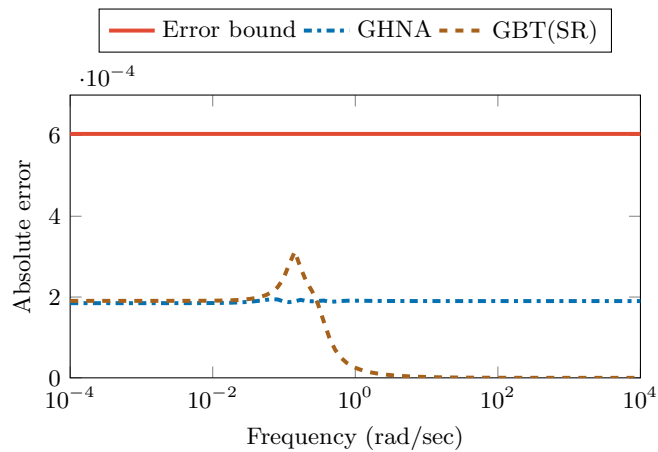}
  \caption{Absolute error of the GHNA and GBT(SR) transfer functions in the 
    spectral norm with the corresponding error bound (mass-spring example).}
  \label{fig:msd_r6_error}
\end{figure}

\section{Conclusion}
\label{sec:conclusion}
  
An algebraic characterization of descriptor systems with all-pass transfer 
function was proven and based on this explanation, an efficient algorithm for 
the computation of the generalized Hankel-norm approximation was developed
by exploiting the generalized balanced truncation square root method. 
To get a numerically more stable algorithm, an approximate version of the 
Hankel-norm approximation was introduced.
For an efficient practical usage, the introduced method was considered for 
sparse large-scale systems as well as for unstructured dense systems.
The approximation behavior of the method was shown on large- and medium-scale
examples.

In contrast to the approach of Cao, Saltik, and Weiland \cite{morCaoSW15},
the method, introduced in this paper, has several numerical advantages.
It has a more stable and efficient computational behavior, due to the 
fact that the Weierstrass canonical form does not have to be computed.
Also, the introduced method can be applied to more general descriptor systems 
since C-controllability and C-observability were not assumed.

\section*{Acknowledgment}
This work was supported by the German Research Foundation (DFG) priority program
1897: ``Calm, Smooth and Smart -- Novel Approaches for Influencing
Vibrations by Means of Deliberately Introduced Dissipation'' and 
the German Research Foundation (DFG) research training group 2297 ``MathCoRe'',
Magdeburg.

\bibliographystyle{plain}
\bibliography{bibtex/myrefs}
\addcontentsline{toc}{section}{References}

\begin{appendix}
  \section*{Appendix}
\label{sec:proofs}

For the proof of Theorem \ref{thm:allpass}, the following lemma is used.

\begin{lemma}
  Each all-pass transfer function $G(s) \in \mathbb{C}^{m \times m}$ is proper.
\end{lemma}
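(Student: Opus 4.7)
The plan is to argue by contradiction. Suppose $G(s)$ is not proper, and write its Laurent expansion around infinity as $G(s) = G_{sp}(s) + P(s)$ where $G_{sp}$ is strictly proper and $P(s) = \sum_{k=0}^{d} M_k s^{k}$ is the polynomial part, with $d \geq 1$ chosen so that $M_d \neq 0$. The goal is to derive a contradiction from the identity $G(s) G^{T}(-s) = \varsigma^{2} I_{m}$ by comparing coefficients of high powers of $s$.

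Next, I would expand the product $G(s) G^{T}(-s) = (G_{sp}(s) + P(s))(G_{sp}^{T}(-s) + P^{T}(-s))$ term-by-term and track the behavior as $|s| \to \infty$. The piece $G_{sp}(s) G_{sp}^{T}(-s)$ decays since each factor is $O(1/s)$, and the cross terms $G_{sp}(s) P^{T}(-s)$ and $P(s) G_{sp}^{T}(-s)$ grow at most like $s^{d-1}$ because one factor in each product is strictly proper. Hence the coefficient of $s^{2d}$ on the left-hand side comes solely from $P(s) P^{T}(-s)$, and equals $(-1)^{d} M_{d} M_{d}^{T}$.

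Since the right-hand side $\varsigma^{2} I_{m}$ is constant, every coefficient of $s^{k}$ for $k \geq 1$ must vanish; in particular $M_{d} M_{d}^{T} = 0$. Because the realization of $G$ in this paper has real matrices, the Markov-like coefficients $M_{k}$ are real, so $M_{d} M_{d}^{T} = 0$ forces $M_{d} = 0$ (the $(i,i)$ entry is $\sum_{j} (M_{d})_{ij}^{2}$). This contradicts the choice of $d$, so no such positive degree exists and $P(s) = M_{0}$ is a constant, making $G$ proper.

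The main obstacle is the bookkeeping of the cross terms: one must verify that neither $G_{sp}(s) P^{T}(-s)$ nor $P(s) G_{sp}^{T}(-s)$ can contribute to the coefficient of $s^{2d}$, which is clear once one writes $G_{sp}$ as a strict Laurent tail in $1/s$. A minor subtlety worth flagging is the use of the transpose rather than the conjugate transpose in the all-pass identity: the argument $M_{d} M_{d}^{T} = 0 \Rightarrow M_{d} = 0$ genuinely relies on $M_{d}$ being real, which is the setting of \eqref{eq:desc}.
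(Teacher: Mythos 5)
Your proof is correct, and it takes a genuinely different route from the paper's. The paper argues entrywise with scalar rational functions: it first treats the SISO case by comparing numerator and denominator degrees, then handles the MIMO case under the simplifying assumptions $m=2$ and a common denominator for all entries, and derives a contradiction from a parity analysis of the leading numerator coefficients of $n_{11}\tilde{n}_{11}+n_{12}\tilde{n}_{12}$. You instead work with the matrix Laurent expansion at infinity: writing $G = G_{sp} + P$ with $P(s)=\sum_{k=0}^{d}M_k s^k$ and $M_d\neq 0$, $d\geq 1$, you isolate the coefficient of $s^{2d}$ in $G(s)G^{T}(-s)$, which is $(-1)^d M_d M_d^{T}$ since the cross terms are $O(s^{d-1})$ and the strictly proper product decays; equating with the constant right-hand side gives $M_d M_d^{T}=0$, and reality of $M_d$ (via the diagonal entries, i.e., $\operatorname{trace}(M_dM_d^T)=\lVert M_d\rVert_F^2$) forces $M_d=0$. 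Your argument is cleaner and strictly more general: it covers arbitrary $m$ in one stroke and needs no common-denominator assumption, whereas the paper's MIMO case as written is only a representative special case. You also correctly flag the one place where reality of the coefficients is genuinely used, which is the same hypothesis the paper's parity argument relies on. The only content of the paper's proof you do not reproduce is the final remark that $G$ cannot be strictly proper either; that is not required by the statement of the lemma (a strictly proper function is proper), and in the main proof it follows anyway from $M_0M_0^{T}=\varsigma^2 I_m$.
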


\begin{proof}
  From the definition of all-pass transfer functions it follows that the product
  \begin{align*}
    G(s)G^{T}(-s) & = I_{m}
  \end{align*}
  has to be proper.
  \vspace{1em}

  \noindent\textit{Improper Case:}
  
  First, let's assume that $G$ is an improper transfer function.
  The entries of $G(s)$ are rational polynomials with real coefficients. 
  Since $G$ is improper at least one entry of $G$ must have a higher numerator 
  polynomial degree than the denominator.
  Also, one can observe that for $G^{T}(-s)$ the entries of the matrix are only 
  transposed and coefficients of odd polynomial order change their signs.
  In the single-input single-output (SISO) case $G(s)$ is given by
  \begin{align*}
    G(s) & = \frac{n(s)}{d(s)},
  \end{align*}
  with $\mathrm{deg}(n) > \mathrm{deg}(d)$.
  Let the numerator and denominator of the para-Hermitian function be denoted 
  by $\tilde{n}(s)$ and $\tilde{d}(s)$.
  In this case, it is obvious that for the product it holds
  \begin{align*}
    2 \cdot \mathrm{deg}(n) & = \mathrm{deg}(n\tilde{n}) >
     \mathrm{deg}(d\tilde{d}) = 2 \cdot \mathrm{deg}(d).
  \end{align*}
  So the product is always improper.
  
  In the multi-input multi-output (MIMO) case, it is assumed for simplicity that
  $m = 2$ and that the denominator is equal for all entries and can be factored 
  out such that
  \begin{align*}
    \begin{aligned}
      G(s) & = \frac{1}{d(s)}\begin{bmatrix} n_{11}(s) & n_{12}(s) \\
        n_{21}(s) & n_{22}(s)\end{bmatrix} && \text{and} & G^{T}(-s) & =
        \frac{1}{\tilde{d}(s)}\begin{bmatrix} \tilde{n}_{11}(s) & 
        \tilde{n}_{21}(s) \\ \tilde{n}_{12}(s) & \tilde{n}_{22}(s)\end{bmatrix}.
    \end{aligned}
  \end{align*}
  The resulting product is then
  \begin{align*}
    G(s)G^{T}(-s) & = \frac{1}{d(s)\tilde{d}(s)} \begin{bmatrix} 
      n_{11}(s)\tilde{n}_{11}(s) + n_{12}\tilde{n}_{12} & 
      n_{11}(s)\tilde{n}_{21}(s) + n_{12}\tilde{n}_{22} \\
      n_{21}(s)\tilde{n}_{11}(s) + n_{22}\tilde{n}_{12} & 
      n_{21}(s)\tilde{n}_{21}(s) + n_{22}\tilde{n}_{22} \end{bmatrix}.
  \end{align*}
  If only one of the product entries would have a higher polynomial degree than
  the denominator the argumentation from the SISO case would follow.
  Therefor, we can assume w.l.o.g. that
  \begin{align*}
    \mathrm{deg}(n_{11}) & = \mathrm{deg}(n_{12}) = \mathrm{deg}(d) + 1 = g + 1.
  \end{align*}
  We concentrate on the $(1, 1)$ entry of the matrix product.
  For the resulting polynomial degrees it holds
  \begin{align*}
    \mathrm{deg}(n_{11}\tilde{n}_{11}) & = 2g + 2 \\
    \mathrm{deg}(n_{12}\tilde{n}_{12}) & = 2g + 2 \\
    \mathrm{deg}(n_{11}\tilde{n}_{11} + n_{12}\tilde{n}_{12}) & \leq 2g + 2 \\
    \mathrm{deg}(d\tilde{d}) & = 2g
  \end{align*}
  To get a proper product transfer function, we need that the two highest
  coefficients in $n_{11}\tilde{n_{11}} + n_{12}\tilde{n}_{12}$ cancel out.
  If we now develop the polynomials with
  \begin{align*}
    \begin{aligned}
      n_{11}(s) & = \sum\limits_{k = 0}^{g + 1}{n_{11,k}s^{k}},
        & n_{12}(s) & = \sum\limits_{k = 0}^{g + 1}{n_{12,k}s^{k}},\\
      \tilde{n}_{11}(s) & = \sum\limits_{k = 0}^{g + 1}
        {\tilde{n}_{11,k}s^{k}},
        & \tilde{n}_{12}(s) & = \sum\limits_{k = 0}^{g + 1}
        {\tilde{n}_{12,k}s^{k}}, 
    \end{aligned}
  \end{align*}
  we get that for the first coefficients
  \begin{align*}
    n_{11,g+1}\tilde{n}_{11,g+1} & = -n_{12,g+1}\tilde{n}_{12,g+1}
  \end{align*}
  has to hold, with $\lvert n_{11,g+1} \rvert = 
  \lvert \tilde{n}_{11,g+1} \rvert$ and $\lvert n_{12,g+1} \rvert = 
  \lvert \tilde{n}_{12,g+1} \rvert$.
  Now, if $g + 1$ is even we get
  \begin{align*}
    \begin{aligned}
      n_{11,g+1} & = \tilde{n}_{11,g+1}, &
        n_{12,g+1} & = \tilde{n}_{12,g+1} &&&
        \Rightarrow n_{11,g+1}^{2} & = -\tilde{n}_{12,g+1}^{2},
    \end{aligned}
  \end{align*}
  and if $g + 1$ is odd
  \begin{align*}
    \begin{aligned}
      n_{11,g+1} & = -\tilde{n}_{11,g+1}, &
        n_{12,g+1} & = -\tilde{n}_{12,g+1}, &&&
        \Rightarrow -n_{11,g+1}^{2} & = \tilde{n}_{12,g+1}^{2}.
    \end{aligned}
  \end{align*}
  Both cases are a contradiction to the condition that the coefficients are
  real and non-zero.
  Therefor, an all-pass transfer function cannot be improper.
  \vspace{1em}
    
  \noindent\textit{Strictly Proper Case:}
  Now, let's assume that $G$ is a strictly proper transfer function.
  Using the same argumentation as in the improper case, we get that the product
  of a strictly proper transfer function with its para-Hermitian is also
  strictly proper.
\end{proof}

Now, Theorem \ref{thm:allpass} can be proven.

\begin{proof}
  At first, we can assume w.l.o.g. that $\varsigma = 1$, since the system can be 
  scaled to that case by $\tilde{B} = \varsigma^{-\frac{1}{2}}B$, $\tilde{C} = 
  \varsigma^{-\frac{1}{2}}C$ and $\tilde{D} = \varsigma^{-1}D$.
  \vspace{1em}
    
  \noindent "$\Rightarrow$":\\
  Assume the transfer function $G(s)$ is all-pass. 
  With the previous lemma it follows that $G(s)$ has to be proper. 
  If we consider now the decomposition of the transfer function into its 
  strictly proper and polynomial part $G(s) = G_{sp}(s) + P(s)$, the polynomial 
  one must satisfy
  \begin{align*}
    P(s) & = \sum\limits_{k = 1}^{\infty}M_{k}s^{k},
  \end{align*}
  with $M_{k} = 0$ for all $k \geq 1$.
  In this case, it holds
  \begin{align*}
    \lim\limits_{s \rightarrow \infty}G(s) & = M_{0},
  \end{align*}
  and with the definition of all-pass transfer functions we get
  \begin{align*}
    M_{0}M_{0}^{T} & = G(s)G^{T}(-s) = I_{m}.
  \end{align*}
  So the expressions \eqref{eq:M0} and \eqref{eq:Mk} hold.
  Since the matrix pencil $\lambda E - A$ is assumed to be regular, there are 
  non-singular matrices $Q, Z \in \mathbb{R}^{n \times n}$, which 
  transform the matrix pencil into the following block diagonal structure
  \begin{align*}
    Q(\lambda E - A)Z & = \lambda \begin{bmatrix} E_{f} & 0 \\ 0 & E_{\infty}
      \end{bmatrix} - \begin{bmatrix} A_{f} & 0 \\ 0 & A_{\infty} \end{bmatrix},
  \end{align*}
  where $\lambda E_{f} - A_{f}$ contains all the finite eigenvalues of $\lambda 
  E - A$ and $\lambda E_{\infty} - A_{\infty}$ contains only infinite 
  eigenvalues.
  These transformation matrices can be used on the complete system as a 
  restricted system equivalence transformation
  \begin{align} \label{eq:blkdiag}
    \begin{aligned}
      & (QEZ, QAZ, QB, CZ, D)\\
      & = \left(\begin{bmatrix} E_{f} & 0 \\ 
        0 & E_{\infty} \end{bmatrix}, \begin{bmatrix} A_{f} & 0 \\ 0 & 
        A_{\infty} \end{bmatrix}, \begin{bmatrix} B_{f} \\ B_{\infty} 
        \end{bmatrix}, \begin{bmatrix} C_{f} & C_{\infty} \end{bmatrix},
        D \right).
    \end{aligned}
  \end{align}
  This system decouples into its slow
  \begin{align*}
    E_{f}\dot{x}_{f}(t) & = A_{f}x_{f}(t) + B_{f}u(t),\\
    y_{f}(t) & = C_{f}x_{f}(t), 
  \end{align*}
  and fast subsystem
  \begin{align*}
    E_{\infty}\dot{x}_{\infty}(t) & = A_{\infty}x_{\infty}(t) + 
      B_{\infty}u(t),\\
    y_{\infty}(t) & = C_{\infty}x_{\infty}(t) + Du(t).
  \end{align*}
  The slow subsystem corresponds to the strictly proper part of the transfer 
  function and the fast subsystem to the polynomial part.
  Then, the constant part of the transfer function is then given by
  \begin{align*}
    M_{0} & = D - C_{\infty}A_{\infty}^{-1}B_{\infty},
  \end{align*}
  and the transfer function $G(s)$ has another realization of the form
  \begin{align*}
    G(s) & = C_{f}(sE_{f} - A_{f})^{-1}B_{f} + M_{0},
  \end{align*}
  with invertible matrix $E_{f}$. 
  
  Now, we can use the definition of an all-pass transfer function, i.e.,
  $G(s)G^{T}(-s) = I_{m}$ to get the relation $G^{-1}(s) = G^{T}(-s)$,
  which can be written as
  \begin{align*}
    G^{-1}(s) & = M_{0}^{-1} - M_{0}^{-1}C_{f}(sE_{f} - A_{f} + 
      B_{f}M_{0}^{-1}C_{f})^{-1}B_{f}M_{0}^{-1}\\
    & = G^{T}(-s)\\
    & = M_{0}^{T} + B_{f}^{T}(-sE_{f}^{T} - A_{f}^{T})^{-1}C_{f}^{T}.
  \end{align*}
  The equality $M_{0}^{-1} = M_{0}^{T}$ was already proven above.
  From the $R$-controllability and $R$-observability assumption together with 
  the regularity of $E_{f}$, it follows that there exist invertible
  matrices $T, W \in \mathbb{R}^{n \times n}$ which transform the realization of 
  the inverse transfer function into the realization of the para-Hermitian one, 
  with
  \begin{align} \label{eq:Ef}
    E_{f}^{T}  & = WE_{f}T,\\ \label{eq:Af}
    -A_{f}^{T} & = W(A_{f} - B_{f}M_{0}^{T}C_{f})T,\\ \label{eq:Cf}
    C_{f}^{T}  & = WB_{f}M_{0}^{T},\\ \label{eq:Bf}
    B_{f}^{T}  & = M_{0}^{T}C_{f}T.
  \end{align}
  Now, these expressions can be reformulated.
  From \eqref{eq:Cf} we obtain
  \begin{align*}
    \begin{aligned}
      && C_{f}^{T} & = WB_{f}M_{0}^{T}\\
      \iff && W^{-1}C_{f}^{T} & = B_{f}M_{0}^{T}\\
      \iff && W^{-1}C_{f}^{T}M_{0} & = B_{f}\\
      \iff && B_{f}^{T} & = M_{0}^{T}C_{f}W^{-T}.
    \end{aligned}
  \end{align*}
  From \eqref{eq:Bf} we get
  \begin{align*}
    \begin{aligned}
      && B_{f}^{T} & = M_{0}^{T}C_{f}T\\
      \iff && B_{f}^{T}T^{-1} & = M_{0}^{T}C_{f}\\
      \iff && M_{0}B_{f}^{T}T^{-1}& = C_{f} \\
      \iff && C_{f}^{T} & = T^{-T}B_{f}M_{0}^{T}.
    \end{aligned}
  \end{align*}
  The equation \eqref{eq:Af} can be reformulated as
  \begin{align*}
    \begin{aligned}
      && -A_{f}^{T} & = W(A_{f} - B_{f}M_{0}^{T}C_{f})T\\
    \iff && -W^{-1}A_{f}^{T}T^{-1} & = A_{f} - B_{f}M_{0}^{T}C_{f}\\
    \iff && A_{f} & = -W^{-1}A_{f}^{T}T^{-1} + B_{f}M_{0}^{T}C_{f}\\
    \iff && -A_{f}^{T} & = T^{-T}A_{f}W^{-T} - C_{f}^{T}M_{0}B_{f}^{T}\\
    &&  & = T^{-T}(A_{f} - B_{f} M_{0}^{T}C_{f})W^{-T}.
    \end{aligned}
  \end{align*}
  And as last one, for \eqref{eq:Ef} it holds
  \begin{align*}
    \begin{aligned}
      && E_{f}^{T}  & = WE_{f}T\\
      \iff && W^{-1}E_{f}^{T}T^{-1} & = E_{f}\\
      \iff && E_{f}^{T} & = T^{-T}E_{f}W^{-T}.
    \end{aligned}
  \end{align*}
  Therefor, $T$ and $W^{-T}$ as well as $T^{-1}$ and $W^{T}$ satisfy the same 
  set of equations, which means that $W = T^{-T}$.
  Using this, the expressions \eqref{eq:Ef}--\eqref{eq:Bf} are equivalent to
  \begin{align} \label{eq:Ef2}
    E_{f}^{T}  & = T^{-T}E_{f}T,\\ \label{eq:Af2}
    -A_{f}^{T} & = T^{-T}(A_{f} - B_{f}M_{0}^{T}C_{f})T,\\ \label{eq:Cf2}
    C_{f}^{T}  & = T^{-T}B_{f}M_{0}^{T},\\ \label{eq:Bf2}
    B_{f}^{T}  & = M_{0}^{T}C_{f}T.
  \end{align}
  The expressions \eqref{eq:Ef2}, \eqref{eq:Af2} and \eqref{eq:Bf2} give the $T$ 
  as solution of the following system of matrix equations
  \begin{align*}
    A_{f}T + T^{T}A_{f}^{T} - B_{f}B_{f}^{T} & = 0,\\
    E_{f}T & = T^{T}E_{f}^{T}.
  \end{align*}
  By setting the symmetric matrix
  $\tilde{G}_{pc} = -TE_{f}^{-T} = -E_{f}^{-1}T^{T}$
  the equation system can be rewritten as
  \begin{align} \label{eq:fcontrol}
    A_{f}\tilde{G}_{pc}E_{f}^{T} + E_{f}\tilde{G}_{pc}A_{f}^{T} + B_{f}B_{f}^{T}
      & = 0.
  \end{align}
  Analogously, it follows
  \begin{align} \label{eq:fobserve}
    A_{f}^{T}\tilde{G}_{po}E_{f} + E_{f}^{T}\tilde{G}_{po}A_{f} + C_{f}^{T}C_{f}
      & = 0.
  \end{align}
  with the symmetric matrix $\tilde{G}_{po} = -T^{-1}E_{f}^{-1} = 
  -E_{f}^{-T}T^{-T}$.
  
  For the matrices $\tilde{G}_{pc}$ and $\tilde{G}_{po}$, the following matrix
  product is considered
  \begin{align} \label{eq:fhsv1}
    \begin{aligned}
      \tilde{G}_{pc}E_{f}^{T}\tilde{G}_{po}E_{f} 
        & = (-TE_{f}^{-T})E_{f}^{T}(-T^{-1}E_{f}^{-1})E_{f} = TT^{-1}
        = I_{n_{f}},
    \end{aligned}
  \end{align}
  and also
  \begin{align} \label{eq:fhsv2}
    \begin{aligned}
      \tilde{G}_{po}E_{f}\tilde{G}_{pc}E_{f}^{T}
        & = (-T^{-1}E_{f}^{-1})E_{f}(-TE_{f}^{-T})E_{f}^{T} = TT^{-1}
        = I_{n_{f}}.
    \end{aligned}
  \end{align}
  Additionally, from \eqref{eq:Cf2} it follows
  \begin{align} \label{eq:fconst1}
    \begin{aligned}
      && T^{T}C_{f}^{T} & = B_{f}M_{0}^{T}\\
      \iff && M_{0}B_{f}^{T} - C_{f}T & = 0\\
      \iff && M_{0}B_{f}^{T} + C_{f}G_{pc}E_{f}^{T} & = 0,
    \end{aligned}
  \end{align}
  and from \eqref{eq:Bf2} we obtain
  \begin{align} \label{eq:fconst2}
    M_{0}^{T}C_{f} + B_{f}^{T}\tilde{G}_{po}E_{f} & = 0.
  \end{align}
  
  With the last step all conditions are satisfied on the realization with 
  invertible matrix $E_{f}$. 
  In the next step, the original dimension of the system has to be rebuild by 
  using the block diagonal structure \eqref{eq:blkdiag}.
  This is done by applying appropriate spectral projectors of the deflating 
  subspaces corresponding to the finite eigenvalues of $\lambda E - A$.
  In case of a system in the form \eqref{eq:blkdiag}, the left and right
  spectral projectors are given by
  \begin{align*}
    \begin{aligned}
      \tilde{P}_{\ell} & = \begin{bmatrix} I_{n_{f}} & 0 \\ 0 & 0 \end{bmatrix} 
        && \text{and} & \tilde{P}_{r} & = \begin{bmatrix} I_{n_{f}} & 0 \\ 
        0 & 0 \end{bmatrix}.
    \end{aligned}
  \end{align*}
  Since the matrices $\tilde{G}_{pc}$ and $\tilde{G}_{po}$ are only determined 
  by the system parts corresponding to the finite eigenvalues, they have to be 
  expended accordingly to the spectral projectors by
  \begin{align*}
    \begin{aligned}
      \tilde{G}_{pc} & \rightarrow \begin{bmatrix} \tilde{G}_{pc} & 0 \\ 0 & 0 
        \end{bmatrix} && \text{and} & \tilde{G}_{po} \rightarrow
        & \begin{bmatrix} \tilde{G}_{po} & 0 \\ 0 & 0 \end{bmatrix}.
    \end{aligned}
  \end{align*}
  Using this, the equation \eqref{eq:fcontrol} is equivalent to
  \begin{align} \label{eq:tcontrol}
    \begin{aligned}
      \begin{bmatrix} A_{f} & 0 \\ 0 & A_{\infty} \end{bmatrix}
        \begin{bmatrix} \tilde{G}_{pc} & 0 \\ 0 & 0 \end{bmatrix}
        \begin{bmatrix} E_{f} & 0 \\ 0 & E_{\infty} \end{bmatrix}^{T} + 
        \begin{bmatrix} E_{f} & 0 \\ 0 & E_{\infty} \end{bmatrix}
        \begin{bmatrix} \tilde{G}_{pc} & 0 \\ 0 & 0 \end{bmatrix}
        \begin{bmatrix} A_{f} & 0 \\ 0 & A_{\infty} \end{bmatrix}^{T} \\ 
      +~\tilde{P}_{\ell}\begin{bmatrix} B_{f} \\ B_{\infty} \end{bmatrix}
        \begin{bmatrix} B_{f} \\ B_{\infty} \end{bmatrix}^{T}
        \tilde{P}_{\ell}^{T} = 0
    \end{aligned}
  \end{align}
  and equation \eqref{eq:fobserve} to
  \begin{align} \label{eq:tobserve}
    \begin{aligned}
      \begin{bmatrix} A_{f} & 0 \\ 0 & A_{\infty} \end{bmatrix}^{T}
        \begin{bmatrix} \tilde{G}_{po} & 0 \\ 0 & 0 \end{bmatrix}
        \begin{bmatrix} E_{f} & 0 \\ 0 & E_{\infty} \end{bmatrix} + 
        \begin{bmatrix} E_{f} & 0 \\ 0 & E_{\infty} \end{bmatrix}^{T}
        \begin{bmatrix} \tilde{G}_{po} & 0 \\ 0 & 0 \end{bmatrix}
        \begin{bmatrix} A_{f} & 0 \\ 0 & A_{\infty} \end{bmatrix} \\
      +~\tilde{P}_{r}^{T}\begin{bmatrix} C_{f} & C_{\infty} \end{bmatrix}^{T}
        \begin{bmatrix} C_{f} & C_{\infty} \end{bmatrix}\tilde{P}_{r} = 0.
    \end{aligned}
  \end{align}
  Also, the matrix product in \eqref{eq:fhsv1} becomes
  \begin{align*}
    \begin{bmatrix} \tilde{G}_{pc} & 0 \\ 0 & 0 \end{bmatrix}
      \begin{bmatrix} E_{f} & 0 \\ 0 & E_{\infty} \end{bmatrix}^{T}
      \begin{bmatrix} \tilde{G}_{po} & 0 \\ 0 & 0 \end{bmatrix}
      \begin{bmatrix} E_{f} & 0 \\ 0 & E_{\infty} \end{bmatrix}
        & = \begin{bmatrix} I_{n_{f}} & 0 \\ 0 & 0 \end{bmatrix},
  \end{align*}
  as well as \eqref{eq:fhsv2} with 
  \begin{align*}
    \begin{bmatrix} \tilde{G}_{po} & 0 \\ 0 & 0 \end{bmatrix}
      \begin{bmatrix} E_{f} & 0 \\ 0 & E_{\infty} \end{bmatrix}
      \begin{bmatrix} \tilde{G}_{pc} & 0 \\ 0 & 0 \end{bmatrix}
      \begin{bmatrix} E_{f} & 0 \\ 0 & E_{\infty} \end{bmatrix}^{T}
      & = \begin{bmatrix} I_{n_{f}} & 0 \\ 0 & 0 \end{bmatrix}.
  \end{align*}
  The constraint \eqref{eq:fconst2} becomes
  \begin{align*}
    M_{0}\begin{bmatrix} B_{f} \\ B_{\infty} \end{bmatrix}^{T}
      \tilde{P}_{\ell}^{T} + \begin{bmatrix} C_{f} & C_{\infty} \end{bmatrix}
      \begin{bmatrix} \tilde{G}_{pc} & 0 \\ 0 & 0 \end{bmatrix}
      \begin{bmatrix} E_{f} & 0 \\ 0 & E_{\infty} \end{bmatrix}^{T} 
      & = 0,
  \end{align*}
  and \eqref{eq:fconst1} is equivalent to
  \begin{align*}
    M_{0}^{T}\begin{bmatrix} C_{f} & C_{\infty} \end{bmatrix}\tilde{P}_{r} +
      \begin{bmatrix} B_{f} \\ B_{\infty} \end{bmatrix}^{T}
      \begin{bmatrix} \tilde{G}_{po} & 0 \\ 0 & 0 \end{bmatrix}
      \begin{bmatrix} E_{f} & 0 \\ 0 & E_{\infty} \end{bmatrix} 
      & = 0.
  \end{align*}
  
  As last step in this part, the realization has to be back-transformed into the 
  original one.
  By multiplying \eqref{eq:tcontrol} from the left with $Q^{-1}$ and from
  the right with $Q^{-T}$ we get
  \begin{align*}
    & Q^{-1}\begin{bmatrix} A_{f} & 0 \\ 0 & A_{\infty} \end{bmatrix}
      \begin{bmatrix} \tilde{G}_{pc} & 0 \\ 0 & 0 \end{bmatrix}
      \begin{bmatrix} E_{f} & 0 \\ 0 & E_{\infty} \end{bmatrix}^{T}Q^{-T}\\ 
    & ~~~+{}Q^{-1}\begin{bmatrix} E_{f} & 0 \\ 0 & E_{\infty} \end{bmatrix}
      \begin{bmatrix} \tilde{G}_{pc} & 0 \\ 0 & 0 \end{bmatrix}
      \begin{bmatrix} A_{f} & 0 \\ 0 & A_{\infty} \end{bmatrix}^{T}Q^{-T} \\ 
    & ~~~+{}Q^{-1}\tilde{P}_{\ell}\begin{bmatrix} B_{f} \\ B_{\infty}
      \end{bmatrix} \begin{bmatrix} B_{f} \\ B_{\infty} \end{bmatrix}^{T}
      \tilde{P}_{\ell}^{T}Q^{-T}\\
    & = Q^{-1}\begin{bmatrix} A_{f} & 0 \\ 0 & A_{\infty} \end{bmatrix}Z^{-1}Z
      \begin{bmatrix} \tilde{G}_{pc} & 0 \\ 0 & 0 \end{bmatrix}Z^{T}Z^{-T}
      \begin{bmatrix} E_{f} & 0 \\ 0 & E_{\infty} \end{bmatrix}^{T}Q^{-T} \\
    & ~~~+{}Q^{-1}\begin{bmatrix} E_{f} & 0 \\ 0 & E_{\infty} \end{bmatrix}
      Z^{-1}Z\begin{bmatrix} \tilde{G}_{pc} & 0 \\ 0 & 0 \end{bmatrix}Z^{T}
      Z^{-T}\begin{bmatrix} A_{f} & 0 \\ 0 & A_{\infty} \end{bmatrix}^{T}
      Q^{-T}\\
    & ~~~+{}Q^{-1}\tilde{P}_{\ell}QQ^{-1}\begin{bmatrix} B_{f} \\ B_{\infty} 
      \end{bmatrix}\begin{bmatrix} B_{f} \\ B_{\infty} \end{bmatrix}^{T}Q^{-T}
      Q^{-T}\tilde{P}_{\ell}^{T}Q^{-T}\\
    & = A\mathcal{G}_{pc}E^{T} + E\mathcal{G}_{pc}A^{T} +  
      P_{\ell}BB^{T}P_{\ell}^{T}\\
    & = 0,
  \end{align*}
  with the spectral projection
  \begin{align*}
    P_{\ell} & = Q^{-1}\tilde{P}_{\ell}Q
      = Q^{-1}\begin{bmatrix} I_{n_{f}} & 0 \\ 0 & 0 \end{bmatrix}Q,
  \end{align*}
  and the symmetric matrix
  \begin{align*}
    \mathcal{G}_{pc} & = Z \begin{bmatrix} \tilde{G}_{pc} & 0 \\ 0 & 0 
      \end{bmatrix}Z^{T}.
  \end{align*}
  These are the conditions \eqref{eq:cproj} and \eqref{eq:control}.
  Analogously, the conditions \eqref{eq:oproj} and \eqref{eq:observe} can be
  shown by multiplying \eqref{eq:tobserve} with $Z^{-T}$ from the left and
  $Z^{-1}$ from the right.
  For the condition \eqref{eq:cproj} it holds
  \begin{align*}
    P_{r}\mathcal{G}_{pc}P_{r}^{T} & = Z\begin{bmatrix} I_{n_{f}} & 0 \\ 
      0 & 0 \end{bmatrix}Z^{-1} Z \begin{bmatrix} \tilde{G}_{pc} & 0 \\ 0 & 0 
      \end{bmatrix}Z^{T}Z^{-T}\begin{bmatrix} I_{n_{f}} & 0 \\ 0 & 0 
      \end{bmatrix}Z^{T} \\
    & = Z \begin{bmatrix} \tilde{G}_{pc} & 0 \\ 0 & 0 \end{bmatrix}Z^{T} \\
    & = \mathcal{G}_{pc},
  \end{align*}
  and for \eqref{eq:oproj}
  \begin{align*}
    P_{\ell}^{T} \mathcal{G}_{po} P_{\ell} & = Q^{T}\begin{bmatrix} 
      I_{n_{f}} & 0 \\ 0 & 0 \end{bmatrix}Q^{-T} Q^{T} \begin{bmatrix} 
      \tilde{G}_{po} & 0 \\ 0 & 0 \end{bmatrix}QQ^{-1}\begin{bmatrix} I_{n_{f}} 
      & 0 \\ 0 & 0 \end{bmatrix}Q \\
    & = Q^{T} \begin{bmatrix} \tilde{G}_{po} & 0 \\ 0 & 0 \end{bmatrix}Q \\
    & = \mathcal{G}_{po}.
  \end{align*}
  The condition \eqref{eq:phsv1} for the proper Hankel singular values is then
  \begin{align*}
    \mathcal{G}_{pc}E^{T}\mathcal{G}_{po}E & = 
      Z \begin{bmatrix} \tilde{G}_{pc} & 0 \\ 0 & 0 \end{bmatrix}Z^{T}
      Z^{-T}\begin{bmatrix} E_{f} & 0 \\ 0 & E_{\infty} \end{bmatrix}^{T}
      Q^{-T}\\
    & ~~~\times{} Q^{T} \begin{bmatrix} \tilde{G}_{po} & 0 \\ 0 & 0
      \end{bmatrix}Q Q^{-1}\begin{bmatrix} E_{f} & 0 \\ 0 & E_{\infty}
      \end{bmatrix}Z^{-1} \\
    & = Z \begin{bmatrix} \tilde{G}_{pc} & 0 \\ 0 & 0 \end{bmatrix}
      \begin{bmatrix} E_{f} & 0 \\ 0 & E_{\infty} \end{bmatrix}^{T}
      \begin{bmatrix} \tilde{G}_{po} & 0 \\ 0 & 0 \end{bmatrix}
      \begin{bmatrix} E_{f} & 0 \\ 0 & E_{\infty} \end{bmatrix}Z^{-1} \\
    & = Z\begin{bmatrix} I_{n_{f}} & 0 \\ 0 & 0 \end{bmatrix}Z^{-1} \\
    & = P_{r},
  \end{align*}
  and also the dual condition \eqref{eq:phsv2} can be shown this way.
  For the additional constraint \eqref{eq:addcon1} it holds
  \begin{align*}
    M_{0}^{T}CP_{r} + B^{T}\mathcal{G}_{po}E & = M_{0}^{T}\begin{bmatrix} 
      C_{f} & C_{\infty} \end{bmatrix}Z^{-1}Z\begin{bmatrix} I_{n_{f}} & 0 \\ 
      0 & 0 \end{bmatrix}Z^{-1}\\
    & ~~~+{}\begin{bmatrix} B_{f} \\ B_{\infty} \end{bmatrix}^{T}Q^{-T}
      Q^{T}\begin{bmatrix} \tilde{G}_{po} & 0 \\ 0 & 0 \end{bmatrix}Q 
      Q^{-1}\begin{bmatrix} E_{f} & 0 \\ 0 & E_{\infty} \end{bmatrix}Z^{-1} \\
    & = \left(M_{0}^{T}\begin{bmatrix} C_{f} & C_{\infty} \end{bmatrix}
      \begin{bmatrix} I_{n_{f}} & 0 \\ 0 & 0 \end{bmatrix}\right. \\
    & \left.~~~+{}\begin{bmatrix} B_{f} \\ B_{\infty} \end{bmatrix}^{T}
      \begin{bmatrix} \tilde{G}_{po} & 0 \\ 0 & 0 \end{bmatrix}
      \begin{bmatrix} E_{f} & 0 \\ 0 & E_{\infty} \end{bmatrix}\right)Z^{-1} \\
    & = 0,
  \end{align*}
  and for the dual one \eqref{eq:addcon2} it works the same.
  Hence, all conditions of the characterization are fulfilled.
  \vspace{1em}
  
  \noindent "$\Leftarrow$":\\
  Now, it is assumed that the conditions \eqref{eq:cproj}--\eqref{eq:addcon2}
  hold.
  It has to be shown that the resulting transfer function of the linear
  descriptor system is all-pass.
  Therefor, a reformulation of \eqref{eq:control} is considered
  \begin{align*}
      P_{\ell}BB^{T}P_{\ell}^{T} & = -A\mathcal{G}_{pc}E^{T} - 
        E\mathcal{G}_{pc}A^{T} \\
      & = -A\mathcal{G}_{pc}E^{T} - E\mathcal{G}_{pc}A^{T} + 
        sE\mathcal{G}_{pc}E^{T} - sE\mathcal{G}_{pc}E^{T} \\
      & = (sE - A)\mathcal{G}_{pc}E^{T} + E\mathcal{G}_{pc}(-sE^{T} - A^{T})
  \end{align*}
  The right-hand side of this expression shall be transformed into the form
  of a transfer function and its para-Hermitian.
  It holds
  \begin{align*}
    & (sE - A)^{-1}P_{\ell}BB^{T}P_{\ell}^{T}(-sE^{T} - A^{T})^{-1}\\
    & = \mathcal{G}_{pc}E^{T}(-sE^{T} - A^{T})^{-1} + 
      (sE - A)^{-1}E\mathcal{G}_{pc}\\
    \Rightarrow~ & CP_{r}(sE - A)^{-1}P_{\ell}BB^{T}P_{\ell}^{T}(-sE^{T} - 
      A^{T})^{-1}P_{r}^{T}C^{T}\\
    & = CP_{r}\mathcal{G}_{pc}E^{T}(-sE^{T} - A^{T})^{-1}P_{r}^{T}C^{T} + 
      CP_{r}(sE - A)^{-1}E\mathcal{G}_{pc}P_{r}^{T}C^{T}.
  \end{align*}
  In those parts with the symmetric matrix $\mathcal{G}_{pc}$, there is an 
  additional spectral projector.
  Using the definition of $P_{r}$ and $\mathcal{G}_{pc}$ from the previous
  direction one obtains
  \begin{align*}
    P_{r}\mathcal{G}_{pc} & = Z\begin{bmatrix} I_{n_{f}} & 0 \\ 0 & 0 
      \end{bmatrix}Z^{-1} Z \begin{bmatrix} \tilde{G}_{pc} & 0 \\ 0 & 0 
      \end{bmatrix}Z^{T} \\
    & = Z \begin{bmatrix} \tilde{G}_{pc} & 0 \\ 0 & 0 \end{bmatrix}Z^{T} \\
    & = \mathcal{G}_{pc}.
  \end{align*}
  Hence, it holds
  \begin{align*}
    & CP_{r}(sE - A)^{-1}P_{\ell}BB^{T}P_{\ell}^{T}(-sE^{T} - 
      A^{T})^{-1}P_{r}^{T}C^{T}\\
    & = C\mathcal{G}_{pc}E^{T}(-sE^{T} - A^{T})^{-1}P_{r}^{T}C^{T} + 
      CP_{r}(sE - A)^{-1}E\mathcal{G}_{pc}C^{T}.
  \end{align*}
  Now, the additional constraint \eqref{eq:addcon2} leads to
  \begin{align*}
    & CP_{r}(sE - A)^{-1}P_{\ell}BB^{T}P_{\ell}^{T}(-sE^{T} - 
      A^{T})^{-1}P_{r}^{T}C^{T} \\
    & = -M_{0}B^{T}P_{\ell}^{T}(-sE^{T} - A^{T})^{-1}P_{r}^{T}C^{T} - 
      CP_{r}(sE - A)^{-1}P_{\ell}BM_{0}^{T}
  \end{align*}
  and, inserting the definition of the spectral projectors, we get on the
  left-hand side
  \begin{align*}
      & CP_{r}(sE - A)^{-1}P_{\ell}BB^{T}P_{\ell}^{T}(-sE^{T} - 
        A^{T})^{-1}P_{r}^{T}C^{T} \\
      ={}& CZ\begin{bmatrix} I_{n_{f}} & 0 \\ 0 & 0 \end{bmatrix}Z^{-1}
        (sE - A)^{-1}Q^{-1}\begin{bmatrix} I_{n_{f}} & 0 \\ 0 & 0 \end{bmatrix}
        QB \\
      & \times{}B^{T}Q^{T}\begin{bmatrix} I_{n_{f}} & 0 \\ 0 & 0 \end{bmatrix}
        Q^{-T}(-sE^{T} - A^{T})^{-1}Z^{-T}\begin{bmatrix} I_{n_{f}} & 0 \\ 
        0 & 0 \end{bmatrix}Z^{T}C^{T} \\
      ={}& CZ\begin{bmatrix} I_{n_{f}} & 0 \\ 0 & 0 \end{bmatrix}(sQEZ -
        QAZ)^{-1}\begin{bmatrix} I_{n_{f}} & 0 \\ 0 & 0 \end{bmatrix}QB \\
      & \times{}B^{T}Q^{T}\begin{bmatrix} I_{n_{f}} & 0 \\ 0 & 0 \end{bmatrix}
        (-sZ^{T}E^{T}Q^{T} - Z^{T}A^{T}Q^{T})^{-1}\begin{bmatrix} I_{n_{f}}
        & 0 \\ 0 & 0 \end{bmatrix}Z^{T}C^{T} \\
      = & \begin{bmatrix} C_{f} & C_{\infty} \end{bmatrix}
        \begin{bmatrix} I_{n_{f}} & 0 \\ 0 & 0 \end{bmatrix}
        \left(s\begin{bmatrix} E_{f} & 0 \\ 0 & E_{\infty} \end{bmatrix} - 
        \begin{bmatrix} A_{f} & 0 \\ 0 & A_{\infty} \end{bmatrix}\right)^{-1}
        \begin{bmatrix} I_{n_{f}} & 0 \\ 0 & 0 \end{bmatrix}
        \begin{bmatrix} B_{f} \\ B_{\infty} \end{bmatrix} \\
      & \times{}\begin{bmatrix} B_{f}^{T} & B_{\infty}^{T} \end{bmatrix}
        \begin{bmatrix} I_{n_{f}} & 0 \\ 0 & 0 \end{bmatrix}
        \left(-s\begin{bmatrix} E_{f}^{T} & 0 \\ 0 & E_{\infty}^{T}
        \end{bmatrix}\begin{bmatrix} A_{f}^{T} & 0 \\ 0 & A_{\infty}^{T} 
        \end{bmatrix}\right)^{-1}
        \begin{bmatrix} I_{n_{f}} & 0 \\ 0 & 0 \end{bmatrix}
        \begin{bmatrix} C_{f}^{T} \\ C_{\infty}^{T} \end{bmatrix} \\
      ={}& \left(C_{f}(sE_{f} - A_{f})^{-1}B_{f} + 
        0\cdot(sE_{\infty} - A_{\infty})^{-1}\cdot0\right)\\
      & \times{}\left(B_{f}^{T}(-sE_{f}^{T} - A_{f}^{T})^{-1}C_{f}^{T} +
        0\cdot(-sE_{\infty}^{T} - A_{\infty}^{T})^{-1}\cdot0\right) \\
      ={}& C_{f}(sE_{f} - A_{f})^{-1}B_{f}
        B_{f}^{T}(-sE_{f}^{T} - A_{f}^{T})^{-1}C_{f}^{T}.
  \end{align*}
  For the right hand-side it holds
  \begin{align*}
      & -M_{0}B^{T}P_{\ell}^{T}(-sE^{T} - A^{T})^{-1}P_{r}^{T}C^{T} - 
        CP_{r}(sE - A)^{-1}P_{\ell}BM_{0}^{T}\\
      & = -M_{0}B_{f}^{T}(-sE_{f}^{T} - A_{f}^{T})^{-1}C_{f}^{T} -
        C_{f}(sE_{f} - A_{f})^{-1}B_{f}M_{0}^{T}.
  \end{align*}
  Using the above expressions, the all-pass condition is satisfied
  \begin{align*}
    G(s)G^{T}(-s) & = (C(sE - A)^{-1}B + D)(B^{T}(sE^{T} - 
       A^{T})^{-1}C^{T} + D^{T})\\
    & = (C_{f}(sE_{f} - A_{f})^{-1}B_{f} + M_{0})(B_{f}^{T}(sE_{f}^{T} - 
      A_{f}^{T})^{-1}C_{f}^{T} + M_{0}^{T})\\
    & = C_{f}(sE_{f} - A_{f})^{-1}B_{f}B_{f}^{T}(sE_{f}^{T} - 
      A_{f}^{T})^{-1}C_{f}^{T}\\
    &~~~+{} M_{0}B_{f}^{T}(sE_{f}^{T} - 
      A_{f}^{T})^{-1}C_{f}^{T} + C_{f}(sE_{f} - A_{f})^{-1}B_{f}M_{0}^{T} +
      M_{0}M_{0}^{T}\\
    & = M_{0}M_{0}^{T}\\
    & = I_{m}.
  \end{align*}
  \vspace{-\baselineskip}
\end{proof}
  
\end{appendix}

\end{document}